\let\mathcal\mathscr
\begin{document}
\newtheorem{theorem}{Theorem}
\newtheorem{lemma}[theorem]{Lemma}
\newtheorem{claim}[theorem]{Claim}
\newtheorem{cor}[theorem]{Corollary}
\newtheorem{prop}[theorem]{Proposition}
\newtheorem{definition}[theorem]{Definition}
\newtheorem{question}[theorem]{Question}
\newtheorem{rem}[theorem]{Remark}
\newcommand{\hh}{{{\mathrm h}}}

\numberwithin{equation}{section}
\numberwithin{theorem}{section}
\numberwithin{table}{section}

\def\sssum{\mathop{\sum\!\sum\!\sum}}
\def\ssum{\mathop{\sum\ldots \sum}}
\def\dsum{\mathop{\sum\sum}}
\def\iint{\mathop{\int\ldots \int}}

\def \vol{\mathrm{vol}\, }

\def\squareforqed{\hbox{\rlap{$\sqcap$}$\sqcup$}}
\def\qed{\ifmmode\squareforqed\else{\unskip\nobreak\hfil
\penalty50\hskip1em\nobreak\hfil\squareforqed
\parfillskip=0pt\finalhyphendemerits=0\endgraf}\fi}

\newfont{\teneufm}{eufm10}
\newfont{\seveneufm}{eufm7}
\newfont{\fiveeufm}{eufm5}
%
%
\newfam\eufmfam
    \textfont\eufmfam=\teneufm
\scriptfont\eufmfam=\seveneufm
    \scriptscriptfont\eufmfam=\fiveeufm
%
%
\def\frak#1{{\fam\eufmfam\relax#1}}

\newcommand{\bflambda}{{\boldsymbol{\lambda}}}
\newcommand{\bfmu}{{\boldsymbol{\mu}}}
\newcommand{\bfxi}{{\boldsymbol{\xi}}}
\newcommand{\bfrho}{{\boldsymbol{\rho}}}

\def\fK{\mathfrak K}
\def\fT{\mathfrak{T}}

\def\fA{{\mathfrak A}}
\def\fB{{\mathfrak B}}
\def\fC{{\mathfrak C}}
\def\fM{{\mathfrak M}}

\def \balpha{\bm{\alpha}}
\def \bbeta{\bm{\beta}}
\def \bgamma{\bm{\gamma}}
\def \blambda{\bm{\lambda}}
\def \bchi{\bm{\chi}}
\def \bphi{\bm{\varphi}}
\def \bpsi{\bm{\psi}}
\def \bomega{\bm{\omega}}
\def \btheta{\bm{\vartheta}}
\def \bnu{\bm{\nu}}
\def \beps{\bm{\varepsilon}}
\def \bxi{\bm{\xi}}

\def\eqref#1{(\ref{#1})}

\def\vec#1{\mathbf{#1}}
\renewcommand{\d}{\mathrm{d}}
\renewcommand{\phi}{\varphi}
\newcommand{\1}{\mathbf{1}}


\def\cA{{\mathcal A}}
\def\cB{{\mathcal B}}
\def\cC{{\mathcal C}}
\def\cD{{\mathcal D}}
\def\cE{{\mathcal E}}
\def\cF{{\mathcal F}}
\def\cG{{\mathcal G}}
\def\cH{{\mathcal H}}
\def\cI{{\mathcal I}}
\def\cJ{{\mathcal J}}
\def\cK{{\mathcal K}}
\def\cL{{\mathcal L}}
\def\cM{{\mathcal M}}
\def\cN{{\mathcal N}}
\def\cO{{\mathcal O}}
\def\cP{{\mathcal P}}
\def\cQ{{\mathcal Q}}
\def\cR{{\mathcal R}}
\def\cS{{\mathcal S}}
\def\cT{{\mathcal T}}
\def\cU{{\mathcal U}}
\def\cV{{\mathcal V}}
\def\cW{{\mathcal W}}
\def\cX{{\mathcal X}}
\def\cY{{\mathcal Y}}
\def\cZ{{\mathcal Z}}
\newcommand{\rmod}[1]{\: \mbox{mod} \: #1}

\def\sA{{\mathscr A}}
\def\sL{{\mathscr L}}
\def\sM{{\mathscr M}}
\def\sN{{\mathscr N}}
\def\sE{{\mathscr E}}
\def\tsL{\widetilde \sL}

\def\cg{{\mathcal g}}

\def\vr{\mathbf r}

\def\e{{\mathbf{\,e}}}
\def\ep{{\mathbf{\,e}}_p}
\def\eq{{\mathbf{\,e}}_q}

\newcommand{\Discr}{\operatorname{Discr}}
\newcommand{\Span}{\operatorname{Span}}
\newcommand{\End}{\operatorname{End}}
\newcommand{\Mat}{\operatorname{Mat}}

\newcommand{\sign}{\operatorname{sign}}
\newcommand{\rank}{\operatorname{rank}}
\newcommand{\Conv}{\operatorname{Conv}}

\newcommand{\RE}{\operatorname{\mathfrak{Re}}}
\newcommand{\IM}{\operatorname{\mathfrak{Im}}}

\newcommand{\pwgcd}{\operatorname{pw-gcd}}

\def\Tr{{\mathrm{Tr}}}
\def\Nm{{\mathrm{Nm}}}
\def\GL{{\mathrm{GL}}}

\def\SS{{\mathbf{S}}}

\def\lcm{{\mathrm{lcm}}}

\newcommand{\ba}{{\bf a}} 

\def\({\left(}
\def\){\right)}
\def\l|{\left|}
\def\r|{\right|}
\def\fl#1{\left\lfloor#1\right\rfloor}
\def\rf#1{\left\lceil#1\right\rceil}

\def\mand{\qquad \mbox{and} \qquad}




\hyphenation{re-pub-lished}

\mathsurround=1pt

\def\bfdefault{b}

\def \F{{\mathbb F}}
\def \K{{\mathbb K}}
\def \N{{\mathbb N}}
\def \Sq{\N^{(2)}}
\def \Nk{\N^{(k)}}
\def \NknH{N_n^{(k)}(H)}%
\def \sNknH{\sN_n^{(k)}(H)}%
\def \LknH{L_n^{(k)}(H)}%
\def \sLknH{\sL_n^{(k)}(H)}%
\def \sLkjH{\sL_j^{(k)}(H)}%
\def \bLknH{\overline{L}_n^{(k)}(H)}%
\def \bLkjH{\overline{L}_j^{(k)}(H)}%
\def \bsLknH{\overline{\sL}_n^{(k)}(H)}%
\def \bsLkjH{\overline{\sL}_j^{(k)}(H)}%
\def \Z{{\mathbb Z}}
\def \Q{{\mathbb Q}}
\def \R{{\mathbb R}}
\def \C{{\mathbb C}}
\def\Fp{\F_p}
\def \fp{\Fp^*}
\def\Nsq{\N^\sharp}

\newcommand{\gal}{\operatorname{Gal}}
\newcommand{\Sqf}{\cS}  
\newcommand{\ftwo}{{\mathbb F}_2} 
\newcommand{\fq}{{\mathbb F}_q}
\newcommand{\fqn}{{\mathbb F}_{q^n}}
\newcommand{\fixme}[1]{\footnote{Fixme: #1}}

\def \xbar{\overline x}
 \def \ybar{\overline y}

\title[Perfect power products  and multiquadratic extensions]{On the number of products which form perfect powers and discriminants
of multiquadratic extensions}

\author[R. de la Bret\`eche] {R\'egis de la Bret\`eche}
\address{ Institut de Math\'ematiques de Jussieu, UMR 7586, 
Universit\'e Paris-Diderot, 
UFR de Math\'ematiques, case 7012, 
B\^atiment Sophie Germain, 
75205 Paris Cedex 13, France} 
\email{regis.de-la-breteche@imj-prg.fr}

\author[P. Kurlberg]{P\"ar Kurlberg}
\address{Department of Mathematics, 
Royal Institute of Technology\\ SE-100 44 Stockholm, Sweden}
\email{kurlberg@math.kth.se}

\author[I. E. Shparlinski] {Igor E. Shparlinski} 
\address{Department of Pure Mathematics, University of New South Wales,
Sydney, NSW 2052, Australia}
\email{igor.shparlinski@unsw.edu.au}

\begin{abstract} We study some counting questions concerning products of 
positive integers $u_1, \ldots, u_n$
which form a nonzero perfect square, or more generally, a perfect
$k$-th power. We obtain an asymptotic formula for the 
number of such integers of bounded size and in particular  improve and generalize
a result  of D.~I.~Tolev~(2011).
We also use similar ideas to count the discriminants of number fields which are
multiquadratic extensions of~$\Q$ and improve and generalize a result  of  N.~Rome (2017).
\end{abstract}

\keywords{Product of integers, perfect square, multiquadratic extension}
\subjclass[2010]{11N25, 11N37, 11R20, 11R45}

\date{\today}
\maketitle
\section{Introduction} 

\subsection{Background and motivation}

Here we use a unified approach to study two intrinsically related problems:
\begin{itemize}
\item we count the number of integer vectors which are multiplicatively dependent 
modulo squares or higher powers, in particular we improve a result of  Tolev~\cite{Tol};  
\item we obtains some statistics for towers of radical extensions and extend and improve 
results of   Baily~\cite{Bail} and Rome~\cite{Rome}. 
\end{itemize}
Our  treatment  of  both problems  is based on similar ideas, namely, on  multiplicative decompositions 
close to those used in~\cite{B17}, see~\eqref{eq:mult_decomp1} and~\eqref{eq:mult_decomp2} in 
the proofs of  Theorems~\ref{thm:NHk} and~\ref{thm:FnH}, 
respectively,  which are our main results. 

More precisely, we study the following two groups of questions.

For a fixed integer $n \geqslant 2$ we are, in particular,  interested in the distribution 
on $n$-dimensional vectors of positive integers 
$$
\vec{a} = (a_1,\ldots,a_n)\in \N^n
$$ 
whose nontrivial sub-product $a_{i_1} \ldots a_{i_m}$, $ 1 \leqslant i_1 < \ldots < i_m \leqslant n$, 
is a perfect square. This seems to be a natural analogue of the question of counting 
multiplicatively dependent vectors~\cite{PSSS}.

Motivated by applications to integer factorisation algorithms a question of the existence 
of such a perfect square   amongst $n$ randomly selected integers of size at most $H$, 
has been extensively studied, 
see~\cite{CGPT,Pom1,Pom2}. 
More precisely, for the above applications it is crucial to determine the smallest value of
$n$ (as a function of $H$) for which at least one  such products is a
perfect square 
with a 
probability close to one; this question  has recently been answered in a spectacular work 
of Croot,  Granville,  Pemantle and Tetali~\cite{CGPT}. 

Further motivation for this work comes from studying the
multiquadratic extensions of $\Q$, that is, fields of the form
\begin{equation}
\label{eq:Quadr Tow}
\Q\(\sqrt\vec{a}\) = \Q\(\sqrt{a_1},\ldots,\sqrt{a_n}\)
\end{equation}
with vectors $\vec{a} = (a_1,\ldots,a_n)\in \N^n$ (or in $\Z^n$),  see, for
example,~\cite{Bail,BLT,Rome} and references therein.  
In particular we count the number of distinct discriminants of such fields up 
a certain bound $X$, and we also count the number of vectors 
$\vec{a}$ in a box for which $\Q\(\sqrt\vec{a}\) $ has the largest possible Galois group
$\gal(\Q(\sqrt{\vec{a}})/\Q) \simeq (\Z/2\Z)^{n}$. 
Finally, we also consider towers of radical extensions of higher degree $k \geqslant 2$ and
count the number of vectors 
$\vec{a}$ in a box for which these extensions are of the largest possible
degree $k^n$.

\subsection{Our results} Our main focus is on products forming squares
when $n$ is fixed, and thus it is easy to see that the existence of a
square product is a rare event. Furthermore, in this case, one can
concentrate on the case when such products include all numbers
$u_1,\ldots,u_n$.

In particular, we are interested in counting such vectors and more generally, vectors for which 
$u_1\ldots u_n$ is a perfect $k$-th power, for a fixed integer $k\geqslant 2$
in the hypercube 
\begin{equation}
\label{eq:Box}
\fB_n\(H\)  = [ 1, H]^n, 
\end{equation}
where $H \in \N$. 
In particular, we study  the cardinality
$$
\NknH = \#\sNknH
$$
 of the set 
$$\sNknH=\left\{(u_1,\ldots,u_n)\in \N^n \cap \fB_n\(H\):~u_1\cdots u_n  \in \Nk
\right\}, 
$$ 
where 
$$
\Nk = \{s^k:~s \in \N\} 
$$
denotes the set of positive integers which are perfect  $k$-th powers.

We note that if $\tau_{n,H}(s)$  denotes the restricted $n$-ary divisor function of $s\in \N$,
that is the number of representation $u_1\ldots u_n = s$ with integers $1 \leqslant u_1,\ldots,u_n \leqslant H$
then 
$$
\NknH= \sum_{s \leqslant H^{n/k}} \tau_{n,H}(s^k).
$$

Here we obtain an asymptotic formula for $N^{(k)}_n(H)$ and then
make it more explicit in the case of squares, that is for $k=2$. In turn this can be
used to study multiquadratic extensions of $\Q$ as in~\eqref{eq:Quadr Tow}. 

In particular, a combination of our results with  a result of
Balasubramanian, Luca and  
Thangadurai~\cite[Theorem~1.1]{BLT} allows to get an asymptotic
formula for the number of  
vectors  $\vec{a}\in \N^n \cap \fB_n\(H\)$ where $\fB_n\(H\)$ is given 
by~\eqref{eq:Box} for which
\begin{equation}
\label{eq:MaxDeg}
[\Q\(\sqrt\vec{a}\):\Q] =2^n.
\end{equation}
We also consider the more difficult questions of counting 
the discriminants  of multiquadratic number fields 

We recall that Rome~\cite{Rome}, making the result of
Baily~\cite[Theorem~8]{Bail} more precise, has recently given the
asymptotic formula for the number of distinct discriminants of size at
most $X$ coming from biquadratic fields
$\Q\big(\sqrt{a}, \sqrt{b}\big)$, see also~\cite[Section~6.1]{CDyDO}.
We also refer to~\cite{Bha,CDyDO,Klu,Won} for other counting result
for discriminants of quartic fields of different types. More
generally, using class field theory, Wright~\cite{W89}, extending
previous results of M{\"a}ki~\cite{Maki} on counting abelian
extensions of $\Q$, has obtained asymptotic formulas for counting
abelian extensions of global fields, though without giving explicit
leading constants and error terms.  We note that M{\"a}ki~\cite{Maki}
gives some (but not full) information about the main term and also obtains a
power saving in the error terms, see, for example~\cite[Theorems~10.5
and~10.6]{Maki}, which however are weaker than our result.
Here we obtain a generalisation of results of Baily~\cite{Bail} and
Rome~\cite{Rome} to multiquadratic extensions $\Q\(\sqrt\vec{a}\)$
for arbitrary length $n \geqslant 2$.

Furthermore, we also count distinct multiquadratic fields having
maximal Galois group, as well as the analogous question regarding
maximal degree extensions generated by higher {\em odd} index radicals
(that is, extension of the form
$ \Q\(\sqrt[k]{\vec{a}}\) = \Q\(\sqrt[k]{a_1},\ldots,\sqrt[k]{a_n}\) $
for odd $k>2$; here $\sqrt[k]{a_i}$ can denote any $k$-th root
of $a_{i}$ but it is convenient to always take a real $k$-th root.)

Our method can easily be adjusted to count  $\vec{a}\in \Z^n \cap \fB_n^\pm\(H\)$ where 
$$
\fB_n^\pm\(H\)   = \([ -H, -1] \cup  [ 1, H]\)^n.
$$

\subsection{Notation} 
We recall
that the notations $U = O(V)$,  $U \ll V$ and  $V \gg U$  are
all equivalent to the statement that $|U| \leqslant c V$ holds
with some constant $c> 0$, which throughout this work may depend 
on the integer parameters $k, n \geqslant 1$, and occasionally, where obvious, 
on the real parameter $\varepsilon> 0$. 

We also   denote
$$
\N_0 = \N \cup \{0\}\mand  \R_+=\R\cap [0,\infty),
$$
and it is convenient to define
$$
\Z_* = \Z\smallsetminus\{0\}.
$$

Throughout the paper, the letter $p$ always denotes a prime number.

\section{Products  which form  powers}

\subsection{Products which are $k$-th powers}
We obtain an asymptotic formula, with a power saving  in the error term, for $\NknH$ for any 
integer $k \geqslant 2$ which  generalizes  and improves a result of Tolev~\cite{Tol} that corresponds to $n=2$
and gives only a logarithmic saving. 
We always write $\vec{m} = (m_1,\ldots,m_n)$ and  introduce the sets
\begin{align*}
&\sM_{n,k} =\{ \vec{m}\in \N_0^n\smallsetminus\{\vec{0}\}:~ k\mid m_1+\ldots+m_n\}, \\
& \sM_{n,k,i} =\{ \vec{m}\in \sM_{n,k}:~ m_1+\ldots+m_n=ik\},\\
& \sM^*_{n,k} =\{ \vec{m}\in \sM_{n,k,1}:~ \#\{ i\, : m_i>0\}\geqslant  2\},\\
&\cE_{n,k,i}=\big\{ \vec{\varepsilon}\in \{0,\ldots,k-1\}^n:~ \varepsilon_1+\ldots+\varepsilon_n=ki\big\}.
\end{align*} 
In particular, the set $\sM_{n,k,1}\smallsetminus \sM^*_{n,k}$ consists of the
$n$ vectors $\vec{m}$ with exactly one nonzero coordinate  which
equals $k$.  
We also
denote
\begin{equation}
\begin{split}
\label{eq:qnk}
 &q_{n,k}=\#\sM_{n,k,1} = \binom{n+k-1}{k}, \\
 &q_{n,k}^*=\# \sM^*_{n,k}=\# \sE_{n,k,1} =  q_{n,k} - n =    \binom{n+k-1}{k} - n. 
\end{split}
\end{equation}  
We consider the vectors  $\vec{t} \in \R_+^{q_{n,k}^*}$, with
components indexed by elements of  $\sM^*_{n,k} $, and define $I_{n,k}$ as the volume of the following polyhedron:
\begin{equation}
\begin{split}
\label{defIk}
I_{n,k}= \vol
\biggl\{\vec{t} = \(t_{\vec{m}}\)_{\vec{m} \in \sM^*_{n,k} }& \in \R_+^{q_{n,k}^*}~:\\
&~ \sum_{\vec{m}\in \sM^*_{n,k} } m_j t_{\vec{m}}  \leqslant 1, \ 
1\leqslant j\leqslant n \biggr\} .
\end{split}
\end{equation}

\begin{rem} Clearly the cube $[0,1/k]^{q_{n,k}^*}$ is inside of the region whose volume is 
measured by $ I_{n,k}$, Hence,  we have 
$$
k^{-q_{n,k}^*} \leqslant I_{n,k} \leqslant 1.
$$ 
\end{rem}

Using the results of~\cite{B01}, which we summarize in Section~\ref{sec:ArithFunc}, 
we derive the following asymptotic formula for $\NknH$.

\begin{theorem}
\label{thm:NHk} 
Let $n\geqslant  1$ and $k\geqslant  2$ be fixed.
There exists $\vartheta_{n,k}>0$ and $Q_{n,k}\in \R[X]$ of degree $q_{n,k}^*$, given by~\eqref{eq:qnk}, 
such that for any $H \geqslant 2$ we have
$$\NknH=H^{n/k}Q_{n,k}(\log H)+O(H^{n/k-\vartheta_{n,k}}),$$
where  the leading coefficient $C_{n,k}$ of $Q_{n,k}$ satisfies  
$$C_{n,k}= I_{n,k}\prod_p\(1-\frac1p\)^{q_{n,k}^* }\(1+\sum_{i=1}^\infty \frac{\#\sE_{n,k,i}}{p^i}\),
$$
where the product is taken over all prime numbers and  $I_{n,k}$ is defined in~\eqref{defIk}. 
\end{theorem}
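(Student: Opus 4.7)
The plan is to encode $u_1 \cdots u_n \in \Nk$ via a prime-by-prime multiplicative decomposition of each $u_i$ and then to reduce the count to a weighted multi-variable divisor sum of the type analysed by de la Bret\`eche~\cite{B01}. For each prime $p$, write $v_p(u_i) = k a_{i,p} + \varepsilon_{i,p}$ with $a_{i,p} \in \N_0$ and $0 \leqslant \varepsilon_{i,p} \leqslant k-1$. Membership of $\prod_i u_i$ in $\Nk$ forces $\sum_{i=1}^n \varepsilon_{i,p} \equiv 0 \pmod k$, so the vector $\vec{\varepsilon}_p := (\varepsilon_{1,p}, \ldots, \varepsilon_{n,p})$ lies in $\cE_{n,k,j}$ for some $0 \leqslant j \leqslant n-1$ (with the convention $\cE_{n,k,0} = \{\vec{0}\}$). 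Grouping primes by $\vec{\varepsilon}_p$ and setting $v_i = \prod_p p^{a_{i,p}}$ together with $d_{\vec{\varepsilon}} = \prod_{p\,:\,\vec{\varepsilon}_p = \vec{\varepsilon}} p$ produces a bijection between $\sNknH$ and tuples $(v_1, \ldots, v_n, (d_{\vec{\varepsilon}})_{\vec{\varepsilon}})$ with $v_i \in \N$ and the $d_{\vec{\varepsilon}} \in \N$ squarefree and pairwise coprime, via
$$
u_i = v_i^k \prod_{\vec{\varepsilon} \in \bigcup_{j \geqslant 1} \cE_{n,k,j}} d_{\vec{\varepsilon}}^{\varepsilon_i}, \qquad 1 \leqslant i \leqslant n.
$$
This is the multiplicative decomposition announced in the introduction.

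Executing the sum over $(v_1, \ldots, v_n)$ first for each fixed admissible $(d_{\vec{\varepsilon}})$, the constraints $u_i \leqslant H$ become $v_i \leqslant H^{1/k} \prod_{\vec{\varepsilon}} d_{\vec{\varepsilon}}^{-\varepsilon_i/k}$. Replacing each resulting floor by its argument (with smaller-order errors) and using $\sum_i \varepsilon_i = kj$ for $\vec{\varepsilon} \in \cE_{n,k,j}$ yields the main term
$$
H^{n/k} \sum_{(d_{\vec{\varepsilon}})} \, \prod_{j \geqslant 1} \prod_{\vec{\varepsilon} \in \cE_{n,k,j}} d_{\vec{\varepsilon}}^{-j},
$$
where the outer sum is over admissible tuples (squarefree, pairwise coprime) satisfying $\prod_{\vec{\varepsilon}} d_{\vec{\varepsilon}}^{\varepsilon_i} \leqslant H$ for each $i$. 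For $j \geqslant 2$ the series $\sum_d d^{-j}$ converges absolutely, so after a suitable truncation these higher-order variables factor into local Euler constants and contribute only to the leading constant, not to the polynomial degree in $\log H$. What remains is a sum over the leading variables $(d_{\vec{m}})_{\vec{m} \in \sM^*_{n,k}}$, weighted by $\prod_{\vec{m}} d_{\vec{m}}^{-1}$ and constrained by $\prod_{\vec{m}} d_{\vec{m}}^{m_i} \leqslant H$ for every $i$.

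This latter sum is exactly of the form treated in~\cite{B01} (summarised in Section~\ref{sec:ArithFunc}). Its associated Dirichlet series factorises, up to holomorphic corrections, as a product of $q_{n,k}^*$ shifted copies of $\zeta$ along the diagonal, producing a pole of order $q_{n,k}^*$ at $s_{\vec{m}} = 1$ and admitting meromorphic continuation with polynomial growth in vertical strips. Applying the multivariate tauberian extraction of~\cite{B01} under the natural rescaling $d_{\vec{m}} = H^{t_{\vec{m}}}$ produces the expansion $H^{n/k} Q_{n,k}(\log H) + O(H^{n/k - \vartheta_{n,k}})$, with $\deg Q_{n,k} = q_{n,k}^*$. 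The leading coefficient $C_{n,k}$ is then identified as the product of: the polyhedron volume $I_{n,k}$, since the rescaling turns the constraints $\prod_{\vec{m}} d_{\vec{m}}^{m_i} \leqslant H$ into $\sum_{\vec{m}} m_j t_{\vec{m}} \leqslant 1$, the defining inequalities of the region in~\eqref{defIk}; the normalising factors $(1 - 1/p)^{q_{n,k}^*}$ that absorb the pole of the leading $\zeta$ powers; and the local density $1 + \sum_{i \geqslant 1} \#\cE_{n,k,i}/p^i$ enumerating at each prime all admissible configurations $\vec{\varepsilon}_p$ of every order $j \geqslant 1$.

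The hard part will be to verify the analytic hypotheses of~\cite{B01} for our Dirichlet series: one must factor the local Euler factor at each prime so as to cleanly separate the logarithmically divergent contribution of the leading variables $d_{\vec{m}}$, $\vec{m} \in \sM^*_{n,k}$, from the absolutely convergent contribution of the higher-order variables $d_{\vec{\varepsilon}}$, $\vec{\varepsilon} \in \cE_{n,k,j}$ with $j \geqslant 2$, while preserving the coprimality that couples them. Establishing a meromorphic continuation slightly to the left of $\RE(s_{\vec{m}}) = 1$ with the correct polar divisor, together with vertical-strip bounds strong enough to drive the Perron-type inversion, is what will furnish the power saving $\vartheta_{n,k} > 0$ and will absorb the bulk of the technical work.
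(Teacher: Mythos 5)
Your proposal is correct and follows essentially the same route as the paper: the same prime-by-prime multiplicative parametrisation (the paper's decomposition~\eqref{eq:mult_decomp1}, which after writing $m_j=\varepsilon_j+kh_j$ is exactly your split into $v_i^k$ and the coprime squarefree $d_{\vec{\varepsilon}}$), reduction to de la Bret\`eche's multivariate tauberian theorem, and the same identification of the degree $q_{n,k}^*$ and of $C_{n,k}$ as $I_{n,k}$ times the Euler product. The only cosmetic difference is that you sum over the $k$-th power parts $v_i$ first before invoking~\cite{B01}, whereas the paper keeps everything inside one $n$-variable Dirichlet series $F(s_1,\ldots,s_n)$ and verifies (P1)--(P3) by factoring out $\zeta(\ell_{\vec{m}}(\vec{s}))$ for $\vec{m}\in\sM_{n,k,1}$ --- precisely the step you correctly flag as the remaining technical work.
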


\subsection{Products which are squares}

We now give more explicit form of Theorem~\ref{thm:NHk}
when $k=2$; this  is important for applications.

In this case we simplify the notation by setting 
$$
N_n(H) = N_n^{(2)}(H), \quad I_n = I_{n,k}, \quad C_n = C_{n,2}, \quad q_n = q_{n,2} \quad q_n^* = q_{n,2}^*.
$$

We now have from~\eqref{eq:qnk}  
$$
q_n=\frac{n(n+1)}{2} \mand q_n^* = \frac{n(n-1)}{2}. 
$$
Observing that 
$$
\#\cE_{n,2,i}= \binom{n}{2i},
$$ we derive
$$C_n= I_n\prod_p \(1-\frac{1}{p}\)^{n(n-1)/2}
\(\frac12 \(1+\frac{1}{p^{1/2}  }\)^n+ \frac12  \(1-\frac{1}{p^{1/2}  }\)^n\) , 
$$
where the product is taken over all prime numbers.

Let $\cH$ be the set of integers $h  \in [0, 2^n-1]$  with exactly two nonzero binary digits.
In particular, 
the first element of $\cH$ is $2+1=3$ and the largest element is $2^{n-1} + 2^{n-2} = 3\cdot 2^{n-2}$.

Then we see that   $I_n$ can now be defined as the volume of the
following polyhedron:
$$
{I}_n= 
\vol \left\{\vec{t} \in \R_+^{\cH}:~ \sum_{ h\in \cH} {\varepsilon_j(h)t_h} \leqslant 1, \    1\leqslant j\leqslant n \right\} , 
$$
where $\varepsilon_j(h)$ denotes the $j$-th digit in the binary expansion of $h$.

\begin{rem}  For numerical calculations we can add another condition 
$t_3\leqslant \ldots \leqslant t_{3\cdot 2^{n-2}}$  and  then multiply
by   $  \(n(n-1)/2\)!$  the resulting integral. 
Thus, we have
$$I_2=1,\qquad I_3=6\int_{\substack{ 
0\leqslant t_3\leqslant t_5\leqslant t_6\leqslant 1-t_5}}\d \vec{t}=
6\int_0^{1/2} t_5(1-2t_5)\d t_5=\tfrac 14.$$
\end{rem}

We now see that for $k=2$, Theorem~\ref{thm:NHk} implies the following result.

\begin{cor}
\label{cor:NH} 
Let $n\geqslant  1$ be fixed.
There exists $\vartheta_n>0$ and $Q_n\in \R[X]$ of degree $ n(n-1)/2$ such that for any $H \geqslant 2$ we have
$$N_n(H)=
H^{n/2}Q_n(\log H)+O(H^{n/2-\vartheta_n}),
$$
where  the leading coefficient $C_n$ of $Q_n$ satisfies  
$$C_n= I_n\prod_p \(1-\frac{1}{p}\)^{n(n-1)/2}
\(\frac12 \(1+\frac{1}{p^{1/2}  }\)^n+ \frac12  \(1-\frac{1}{p^{1/2}  }\)^n\), 
$$
where the product is taken over all prime numbers. 
\end{cor}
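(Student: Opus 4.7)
The plan is to derive Corollary~\ref{cor:NH} as a direct specialization of Theorem~\ref{thm:NHk} at $k=2$; no new analytic input is needed. First, from~\eqref{eq:qnk} at $k=2$ one reads off $q_{n,2}=\binom{n+1}{2}=n(n+1)/2$ and $q_{n,2}^*=q_{n,2}-n=n(n-1)/2$, which gives the asserted degree of $Q_n$ and the exponent $n(n-1)/2$ appearing in the Euler product; the constant $\vartheta_n$ is simply $\vartheta_{n,2}$ furnished by Theorem~\ref{thm:NHk}.

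Next, I would identify the $I_{n,2}$ polyhedron of~\eqref{defIk} with the polyhedron defining $I_n$ in the corollary. For $k=2$ a vector $\vec m\in\sM^*_{n,2}$ has nonnegative integer entries summing to $2$ with at least two positive coordinates, forcing exactly two entries equal to $1$. The map $\vec m\mapsto h(\vec m):=\sum_{j=1}^n m_j 2^{j-1}$ is then a bijection from $\sM^*_{n,2}$ onto $\cH$, under which $m_j$ coincides with the $j$-th binary digit $\varepsilon_j(h)$. The linear constraints $\sum_{\vec m} m_j t_{\vec m}\leqslant 1$ become $\sum_{h}\varepsilon_j(h) t_h\leqslant 1$, so $I_{n,2}=I_n$.

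Finally, I would simplify the Euler factor. Since $\cE_{n,2,i}$ consists of $\vec\varepsilon\in\{0,1\}^n$ with $\sum_j\varepsilon_j=2i$, we have $\#\cE_{n,2,i}=\binom{n}{2i}$, and hence
$$
1+\sum_{i=1}^\infty\frac{\#\cE_{n,2,i}}{p^i}=\sum_{i=0}^\infty\binom{n}{2i}p^{-i}=\frac12\bigl((1+p^{-1/2})^n+(1-p^{-1/2})^n\bigr),
$$
using the standard identity $\sum_{i\geqslant 0}\binom{n}{2i}x^{2i}=\tfrac12((1+x)^n+(1-x)^n)$ at $x=p^{-1/2}$. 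Substituting this together with $I_{n,2}=I_n$ into the formula of Theorem~\ref{thm:NHk} for $C_{n,k}$ yields the claimed expression for $C_n$. There is no genuine obstacle here: the argument is purely formal once these three combinatorial identifications are in place. The only mild point worth noting is the convergence of the Euler product, which is immediate since each local factor equals $1+O(p^{-2})$ after the $\binom{n}{2}/p$ contribution in $\tfrac12((1+p^{-1/2})^n+(1-p^{-1/2})^n)$ cancels against the corresponding term in $(1-1/p)^{n(n-1)/2}$.
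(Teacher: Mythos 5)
Your proposal is correct and follows essentially the same route as the paper, which likewise obtains Corollary~\ref{cor:NH} by specializing Theorem~\ref{thm:NHk} at $k=2$: reading off $q_{n,2}^*=n(n-1)/2$ from~\eqref{eq:qnk}, identifying $\sM^*_{n,2}$ with $\cH$ via binary digits so that $I_{n,2}=I_n$, and using $\#\cE_{n,2,i}=\binom{n}{2i}$ together with the even-binomial identity to rewrite the local Euler factor. Nothing is missing.
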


In particular, for $n=2$, we have 
\begin{align*}
C_2 & = I_2 \prod_p \(1-\frac{1}{p}\)
\(\frac12 \(1+\frac{1}{p^{1/2}  }\)^2+ \frac12  \(1-\frac{1}{p^{1/2}  }\)^2\)\\
& = \prod_p \(1-\frac{1}{p}\) \(1+\frac{1}{p}\)
=  \prod_p \(1-\frac{1}{p^2}\) = \zeta(2)^{-1} =\frac{6}{\pi^2}, 
\end{align*}
where $\zeta$ is the Riemann zeta-function.

\section{Counting multiquadratic fields} 

\subsection{Discriminants  of multiquadratic fields}
\label{sec:Discr}

Let $F_n(X)$ be the number of distinct fields $\Q\(\sqrt\vec{a}\)$ with $\vec{a} \in \Z^n$  
of  largest possible degree  
as in~\eqref{eq:MaxDeg} whose  discriminant over $\Q$
satisfy
$$
 \Discr \Q\(\sqrt\vec{a}\)  \leqslant X.
$$

Let us define
 \begin{equation}
\label{eq: def hn} 
t_n= \prod_{k=0}^{n-1} (2^n-2^k).
\end{equation}

\begin{theorem}
\label{thm:FnH} 
Let $n\geqslant  1$ and $\varepsilon > 0$ be fixed. There exists a
polynomial $P_n$  of degree $2^n-2$ with the
leading coefficient  
\begin{align*}
A_n=\frac{4^n +5\cdot 2^n+10}{2^{3+(n-1)(2^n-2)}({2^n+1})(2^n-2)!t_n}& \\
\prod_p\(1-\frac1p\)^{2^n-1}&\(1+\frac{2^n-1}p\),
\end{align*}
such that, for $X\geqslant  2$ ,
$$ 
F_n(X)= X^{1/2^{n-1}} \( P_n(\log X)+O_{\varepsilon}\(X^{-\eta_n +\varepsilon}\)\), 
$$
where
$$
\eta_n = \frac{3}{2^{n-1}(5+2^n)}.
$$
\end{theorem}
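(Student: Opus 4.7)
The plan is to combine the conductor--discriminant formula with a multiplicative decomposition of the generators, analogous to the one used in the proof of Theorem~\ref{thm:NHk}. A multiquadratic field $K=\Q\bigl(\sqrt{\vec a}\,\bigr)$ of degree $2^n$ is uniquely determined by the $n$-dimensional $\F_2$-subspace $V_K\subset\Q^\times/(\Q^\times)^2$ spanned by the classes of the~$a_i$. The group $V_K\cong(\Z/2\Z)^n$ admits exactly $t_n=|\GL_n(\F_2)|$ ordered bases, so the counting problem reduces, after division by~$t_n$, to enumerating ordered tuples $\vec b\in\Z_*^n$ of squarefree integers that are multiplicatively independent modulo squares and satisfy $|\Discr(\Q(\sqrt{\vec b}))|\leqslant X$.

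Next, I would apply the conductor--discriminant formula
$$|\Discr(K)|=\prod_{v\in V_K\setminus\{1\}}\bigl|d(\Q(\sqrt v))\bigr|.$$
For each odd prime $p$ dividing some $b_i$, set $S(p)=\{i:p\mid b_i\}$. Then $p$ ramifies in $\Q(\sqrt{v_T})$ precisely for those $T\subseteq\{1,\ldots,n\}$ with $|S(p)\cap T|$ odd, which accounts for exactly $2^{n-1}$ of the $2^n-1$ nontrivial subsets~$T$. Hence the odd part of $|\Discr(K)|$ equals $R^{2^{n-1}}$, where $R=\prod_{p\text{ odd},\,S(p)\neq\emptyset}p$. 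Writing $b_i=\varepsilon_i\prod_{S\ni i}d_S$ with signs $\varepsilon_i\in\{\pm 1\}$ and pairwise coprime positive squarefree integers $(d_S)$ indexed by nonempty subsets $S\subseteq\{1,\ldots,n\}$, one has $R=\prod_S d_S$ and the discriminant bound translates, up to a bounded $2$-adic factor, into $\prod_S d_S\leqslant Y$ with $Y\asymp X^{1/2^{n-1}}$.

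The main term is then extracted from the $(2^n-1)$-variable Dirichlet series
$$\sum_{(d_S)}\prod_S\frac{\mu^2(d_S)}{d_S^s}\,\mathbf{1}[\text{pairwise coprime}]=\prod_p\Bigl(1+\frac{2^n-1}{p^s}\Bigr),$$
which factors as $\zeta(s)^{2^n-1}G(s)$ with $G$ holomorphic in the half-plane $\operatorname{\mathfrak{Re}}(s)>1/2$. Perron's formula and a contour shift yield a main term of the form $Y\,P(\log Y)$ with $\deg P=2^n-2$ whose leading coefficient contains the factor $\frac{1}{(2^n-2)!}\prod_p(1-1/p)^{2^n-1}(1+(2^n-1)/p)$. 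Dividing by $t_n$ and multiplying by the global constant coming from averaging the $2$-adic weight $\prod_{v}|d(\Q(\sqrt v))|_2$ and the sign constraints over residues of $(b_1,\ldots,b_n)$ modulo~$8$ and over $\{\pm 1\}^n$, together with a M\"obius-type inversion over proper $\F_2$-subspaces of $V_K$ to enforce the maximal-Galois-group condition (which contributes only to lower-order terms), should recover the asserted constant $A_n$.

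The main obstacle will be the explicit $2$-adic calculation that produces the rational factor $(4^n+5\cdot 2^n+10)/\bigl(2^{3+(n-1)(2^n-2)}(2^n+1)\bigr)$: the $2$-adic valuation of $|\Discr(K)|$ depends intricately on the residues of the~$b_i$ modulo~$8$, and one must carefully enumerate these residue classes weighted by the $2$-part of $\prod_{v\in V_K\setminus\{1\}}|d(\Q(\sqrt v))|$, while simultaneously imposing the maximal-Galois-group constraint at $p=2$. The power saving $\eta_n=3/(2^{n-1}(5+2^n))$ in the error term will follow from optimising between the truncation height in Perron's formula and a convexity-type bound on vertical lines for the $(2^n-1)$-fold product of zeta factors above; the precise form of the exponent is expected to emerge from the interplay between the pole order $2^n-1$ and a fourth-moment or smoothed estimate for $\zeta$.
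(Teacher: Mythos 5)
Your outline follows essentially the same route as the paper: reduce to ordered tuples via $t_n=\#\GL_n(\ftwo)$, apply the conductor--discriminant formula to see that each odd ramified prime contributes exponent $2^{n-1}$, decompose the squarefree generators into pairwise coprime components $d_S$ indexed by nonempty subsets $S\subseteq\{1,\ldots,n\}$ (the paper's $z_h$), and run the resulting Dirichlet series $\prod_p\(1+(2^n-1)p^{-s}\)=\zeta(s)^{2^n-1}G(s)$ through a Tauberian/Selberg--Delange argument to get $T_n(x)=\sum_{m\le x,\ m\ \mathrm{odd}}\mu^2(m)(2^n-1)^{\omega(m)}\sim x\,Q_n(\log x)/(2^n-2)!$ with a power saving. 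Your treatment of the degree condition (degenerate tuples forcing some $d_S=1$, hence lower order) is also consistent with the paper.

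The genuine gap is the part you explicitly defer: the $2$-adic computation is not a routine ``global constant'' to be averaged at the end, but the step that determines both the correct normalisation of the argument of $T_n$ and the rational factor in $A_n$. Concretely, one must show that $W=v_2\(\Discr\Q(\sqrt{\vec a})\)$ takes only the four values $0$, $2^n$, $3\cdot 2^{n-1}$, $2^{n+1}$, classified by the residues of the $a_j$ modulo $4$ and $8$ (via the counts $r_{1,4},r_{3,4},r_{2,8},r_{6,8}$ of components in each class), and count the number $C_n(W)$ of sign/residue configurations realising each value: $1$, $2^n-1$, $2^{n+1}-2$ and $4^n-3\cdot 2^n+2$ respectively. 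The bound $\Discr\leqslant X$ then becomes $\prod_S d_S\leqslant X^{1/2^{n-1}}/2^{W/2^{n-1}}$ separately for each configuration, and the identity
$$\sum_{W}C_n(W)\,2^{-W/2^{n-1}}=\frac{4^n+5\cdot 2^n+10}{16}$$
is precisely where the numerator of $A_n$ comes from; the remaining power of $2$ in the denominator arises from $(\log X^{1/2^{n-1}})^{2^n-2}=2^{-(n-1)(2^n-2)}(\log X)^{2^n-2}$, and the factor $2/(2^n+1)$ from the Euler factor at $p=2$. Without this case analysis the claimed value of $A_n$ cannot be verified. Your error-term heuristic is also slightly off target: no fourth-moment input is needed; the exponent $\kappa_n=3/(5+2^n)$ is the standard saving for $\sum\mu^2(m)\lambda^{\omega(m)}$ with $\lambda=2^n-1$ (cf.\ the paper's reference to~\cite{TW}), and $\eta_n=\kappa_n/2^{n-1}$ simply reflects the substitution $x=X^{1/2^{n-1}}$.
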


We remark that
Rome~\cite{Rome} has obtained a special case of Theorem~\ref{thm:FnH}
 for $n=2$, however with a larger error term, see also~\cite{Bail, W89}. 
 A version of Theorem~\ref{thm:FnH} is also given by
 Fritsch~\cite{Frit}. His  method
 is more elementary and gives a weaker bound on error term, though also with
 a power saving.

 Let $f_n(d)$ be the number of distinct fields $\Q\(\sqrt\vec{a}\)$ with $\vec{a} \in \N^n$ of  largest possible degree  as in~\eqref{eq:MaxDeg} whose  discriminants over $\Q$ satisfy 
$\Discr Q\(\sqrt\vec{a}\)=d$.

We now explicitly evaluate the generating series
$$
g_n(s) = \sum_{d=1}^\infty \frac{f_n(d)}{d^{s}}, \qquad s \in \C.
$$
For this we define
\begin{equation}
\label{eq:Prod H}
h_n(s)=\prod_{p>2} \(1+\frac{2^n-1}{p^s}\),   \qquad s \in \C, \ \RE  s > 1. 
\end{equation}

\begin{theorem}
\label{thm:GenSer}
Let $n\geqslant  1$ be fixed. For any $s\in \C$ with $\RE s >1/2^{n-1}$ we have
$$
g_n(s)
=\frac{h_n\(2^{n-1}s\)}{t_n} \(1+
\frac{ 2^{n }-1}{2^{ 2^{n }s}}  +\frac{ 2^{n+1}-2}{2^{3\cdot2^{n }s}} +\frac{4^n -3 \cdot 2^n +2}{2^{2^{n+1}s}} \).
$$
\end{theorem}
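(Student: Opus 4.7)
The plan is to obtain the formula by parameterizing multiquadratic fields via rank-$n$ $\F_2$-subspaces of $\Q^*/(\Q^*)^2$ and then factoring the Dirichlet series as an Euler product over rational primes. Each field $K$ of maximal degree $2^n$ corresponds uniquely to such a subspace $V$ generated by its squarefree generators, and each $V$ admits exactly $t_n = |\GL_n(\F_2)|$ ordered bases. Writing
\[
t_n \, g_n(s) = \sum_{\vec v} \Discr(\Q(\sqrt{\vec v}))^{-s},
\]
with $\vec v = (v_1, \ldots, v_n)$ ranging over ordered $n$-tuples of squarefree positive integers that are multiplicatively independent modulo squares, the conductor--discriminant formula $\Discr(\Q(\sqrt{\vec v})) = \prod_{v \in \langle \vec v\rangle \setminus \{1\}} \Discr(\Q(\sqrt v))$ lets us factor prime-by-prime via the explicit formulas for quadratic discriminants.

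The Euler factor at an odd prime $p$ is straightforward: since $p$ ramifies in $K$ exactly when its ``column'' $(v_p(v_i))_{i=1}^n \in \F_2^n$ is nonzero, and in that case contributes $p^{2^{n-1}}$ to $\Discr(\Q(\sqrt{\vec v}))$ (because $2^{n-1}$ of the $2^n - 1$ nontrivial characters are ramified at $p$), summing over the $2^n$ possible columns gives the local factor $1 + (2^n-1)p^{-2^{n-1}s}$; taking the product over $p>2$ yields $h_n(2^{n-1}s)$.

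The main obstacle is the local factor at $p = 2$. Here the conductor exponent of $\chi_v$ at $2$ takes one of three values $0, 2, 3$ depending on the class of $v$ in $\Q_2^*/(\Q_2^*)^2 \cong \F_2^3$, so the $2$-adic exponent $v_2(\Discr(\Q(\sqrt{\vec v})))$ is determined by the image $W \subset \F_2^3$ of $V$ together with its dimension. I would classify $W$ by its ``conductor type'' into the four classes that actually arise, enumerate the number of subspaces $V$ projecting onto each $W$, and count the ways an ordered basis of $V$ can map onto $W$ using the formula $\prod_{i=0}^{k-1}(2^n - 2^i)$ for the number of surjective $\F_2$-linear maps $\F_2^n \twoheadrightarrow \F_2^k$. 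After simplification the contributions collapse to four terms with coefficients $1$, $2^n - 1$, $2(2^n - 1)$ and $(2^n - 1)(2^n - 2)$, matching the polynomial factor of the theorem. The real subtlety is that the $2$-adic class of $v_i$ is not purely local at $p=2$ but depends on the mod-$8$ residues of its odd prime factors; one must therefore verify carefully that, once the sum is organized by the linear map $\F_2^n \to \F_2^3$ attached to $\vec v$, the $2$-adic and odd-prime data decouple and yield exactly the asserted polynomial. Combining everything and dividing by $t_n$ gives the claimed identity.
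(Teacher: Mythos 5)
Your overall strategy (assemble the Dirichlet series directly from the basis parametrization, the factor $t_n=\#\GL_n(\F_2)$, the conductor--discriminant formula, the odd radical producing $h_n(2^{n-1}s)$, and a four-way classification of the $2$-adic type with coefficients $1$, $2^n-1$, $2(2^n-1)$, $(2^n-1)(2^n-2)$) is in substance the same decomposition the paper uses; the paper merely packages it differently, first proving the exact counting identity for $F_n(X)$ in the proof of Theorem~\ref{thm:FnH} and then obtaining $g_n(s)$ by summing $f_n(d)=F_n(d)-F_n(d-1)$, which turns the counting identity into the generating-series identity with no further work. Your target coefficients are correct and your surjection counts $\prod_{i=0}^{k-1}(2^n-2^i)$ reproduce them exactly.

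There is, however, one step that fails as written: you sum over tuples of \emph{positive} squarefree integers. This is fatal in two ways. First, a field that is not totally real has no generating tuple with all entries positive (an all-positive basis would lie in the index-two subgroup of totally positive classes of $V$ and could not span), so such fields are simply absent from your sum, whereas the stated formula counts them: the coefficients sum to $4^n$, which requires four $2$-adic classes per coordinate, and that fourth degree of freedom is the sign. Second, and this is exactly the ``subtlety'' you defer to careful verification, the decoupling of the $2$-adic and odd-prime data is \emph{false} over positive integers: for a fixed odd squarefree part $z$, the positive representatives $z$ and $2z$ realize only two of the four conductor classes $1\ (4)$, $3\ (4)$, $2\ (8)$, $6\ (8)$, and which two depends on $z \bmod 4$; the resulting series therefore carries the nonprincipal character mod $4$ of the odd parts and does not factor as $h_n(2^{n-1}s)$ times a polynomial in $2^{-s}$ (already for $n=1$, $\sum_{a>1\ \mathrm{squarefree}} \Discr(\Q(\sqrt a))^{-s}$ is not of this form). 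The repair is what the paper does: take $\vec v\in\Z_*^n$. Then each field still has exactly $t_n$ ordered squarefree generating tuples, and for each odd part $z$ the four representatives $\pm z,\pm 2z$ realize all four conductor classes at $2$, so the sign and the power of $2$ together give an exact $4^n$-fold decoupled configuration space, the odd parts contribute $h_n(2^{n-1}s)$, and your surjection counts yield the asserted polynomial. With that single change your argument matches the content of the paper's proof of the identity for $F_n(X)$ and hence of Theorem~\ref{thm:GenSer}.
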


\subsection{Multiquadratic fields with maximal Galois groups}
\label{sec:Gal}

We also wish to determine the number of distinct multiquadratic fields of
the form $\Q(\sqrt{\vec{a}})$ for $\vec{a}\in  \N^n \cap \fB_n\(H\)$, 
that have  maximal Galois group  
$$
\gal(\Q(\sqrt{\vec{a}})/\Q) \simeq (\Z/2\Z)^{n},
$$   
that is,
\begin{align*}
&G_n(H)\\
& \quad = \# \left\{\Q\(\sqrt{\vec{a}}\):~\vec{a}\in  \N^n \cap \fB_n\(H\)\
\text{and}\ \# \gal\(\Q\(\sqrt{\vec{a}}\)/\Q\) = 2^n \right \}.
\end{align*}

\begin{theorem}
\label{thm:MaxGal} 
We have, as $H \to \infty$,
$$
G_n(H) 
= \(\frac{1}{n!\zeta(2)^n}+O\({\rm e}^{-(1+o(1))\sqrt{ (\log H)(\log \log H)/2}}\)\)   H^{n}  .
$$
\end{theorem}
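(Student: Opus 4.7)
The plan is to parametrise each multiquadratic field of maximal Galois group by the associated rank $n$ subgroup of $\Q^*/(\Q^*)^2$, count these subgroups via ordered squarefree bases in $[1,H]^n$, and bound the overcount produced by subgroups admitting more than one such basis.

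First I would reduce to squarefree data: writing $a=sm^2$ with $s$ squarefree gives $\Q(\sqrt{a})=\Q(\sqrt{s})$, and by standard Kummer theory $\Q(\sqrt{\vec a})$ has Galois group of order $2^n$ iff the squarefree parts $s_1,\ldots,s_n$ are $\F_2$-linearly independent in $\Q^*/(\Q^*)^2$; two tuples yield the same field iff they generate the same rank $n$ subgroup $V$. Since $s_i\leqslant a_i\leqslant H$, $G_n(H)$ coincides with the number of rank $n$ subgroups $V\subset\Q^*/(\Q^*)^2$ admitting a squarefree $\F_2$-basis in $[1,H]$.

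Next I would count the ordered tuples
$$
U_n(H)=\#\{(s_1,\ldots,s_n)\in[1,H]^n:~s_i\text{ squarefree and linearly independent mod squares}\}.
$$
The elementary estimate $\sum_{s\leqslant H}\mu(s)^2=H/\zeta(2)+O(\sqrt H)$ gives $(H/\zeta(2))^n+O(H^{n-1/2})$ without the independence condition, and Corollary~\ref{cor:NH} applied to each subset of size between $2$ and $n$ controls linearly dependent tuples (note that for tuples satisfying the independence condition the coordinates are automatically pairwise distinct, so division by $n!$ passes from ordered to unordered). Each rank $n$ subgroup $V$ admitting a squarefree basis in $[1,H]$ contributes at least $n!$ ordered bases via permutation; for generic $V$ these are the only ones. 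Additional ordered bases would require some non-generator element of $V$, i.e.\ the squarefree part of a product $\prod_{i\in I}s_i$ with $|I|\geqslant 2$, to be $\leqslant H$. For $|I|=2$ this forces $\gcd(s_i,s_j)\geqslant\sqrt{s_is_j/H}$; writing $s_i=ga$, $s_j=gb$ with $g,a,b$ pairwise coprime squarefree and $ab\leqslant H$, one finds $O(H^{3/2}\log H)$ such pairs, and extending trivially to $n$-tuples gives an overcount of $O(H^{n-1/2}\log H)$. Cases $|I|\geqslant 3$ impose stronger constraints and contribute no more.

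All three sources of error -- the squarefree count, Corollary~\ref{cor:NH} over each subset size, and the pair-gcd overcount -- are comfortably absorbed by $H^nE(H)$ with $E(H)=\exp(-(1+o(1))\sqrt{(\log H)(\log\log H)/2})$. Combining gives $n!\,G_n(H)=U_n(H)+O(H^nE(H))$, from which division by $n!$ yields the theorem. The main hurdle is organising the pair-gcd analysis so that the contributions from the various subset sizes $2\leqslant|I|\leqslant n$ are not double-counted and that the subgroup counts agree with the ordered tuple counts up to the required precision; because $E(H)$ is much weaker than any power saving, the underlying arithmetic estimates on squarefree integers with an atypical common factor are themselves elementary.
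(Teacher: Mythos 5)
Your argument is correct, but it is genuinely different from the paper's. The paper first prunes to a rigid subfamily $\cT_n(H,Q)$ of squarefree tuples with all pairwise gcds at most $Q$ and no $Q$-friable component; the "private large prime" in each coordinate then forces, via linear algebra over $\F_2$, that two tuples of $\cT_n(H,Q)$ spanning the same subgroup of $\Q^\times/(\Q^\times)^2$ agree up to permutation. The cost of this pruning is the friable-number bound $\psi(H,Q)\ll Hu^{-u}$ versus the gcd loss $H^nQ^{-1}$, and optimising $Q=\exp\big(\sqrt{(\log H)(\log\log H)/2}\big)$ is precisely what produces the sub-power error term in the statement. You instead keep all independent squarefree tuples and bound the overcount $\sum_V(B(V)-n!)$ directly, observing that a subgroup $V$ with two distinct unordered bases in $[1,H]$ must contain, beyond the given basis $s_1,\ldots,s_n$, an element of squarefree representative at most $H$, i.e.\ the squarefree part of some $\prod_{i\in I}s_i$ with $\#I\geqslant 2$; your gcd parametrisation for $\#I=2$ gives $O(H^{3/2}\log H)$ pairs (and for $\#I=m\geqslant 3$ one can simply invoke $N_{m+1}(H)\ll H^{(m+1)/2+o(1)}$ by adjoining the squarefree part as an extra coordinate), so the total overcount is $O(H^{n-1/2+o(1)})$. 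Together with the squarefree count and the bound on dependent tuples from Corollary~\ref{cor:NH}, this yields $G_n(H)=H^n/(n!\zeta(2)^n)+O(H^{n-1/2+o(1)})$, a power saving that is strictly stronger than the error term claimed in Theorem~\ref{thm:MaxGal}; your worry about double-counting over subset sizes is harmless since only a union (upper) bound on the overcount is needed. The trade-off is that the paper's rigidity argument transfers verbatim to the $k$-th power setting of Theorem~\ref{thm:MaxDeg} by replacing $\F_2$ with $\Z/k\Z$, whereas your route would require redoing the "small $k$-free part of a subproduct" estimate there; but within the quadratic case your approach is both more elementary and quantitatively sharper.
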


\subsection{Higher index radical extensions with maximal degree.}
\label{sec:gg}

Let $k \geqslant  3$ be an odd integer.    We can also determine the number of distinct fields
$$
K_{\ba} = \Q(\sqrt[k]{\vec{a}}) = \Q(\sqrt[k]{a_{1}}, \ldots,
\sqrt[k]{a_{k}}),
$$ 
where $\sqrt[k]{a_{i}}$ always denotes the real
$k$-th root of $a_{i}$, for $\vec{a}\in \N^n \cap \fB_n\(H\)$, that
have maximal degree,  that is
$$
G_n^{k}(H)
= \# \left\{\Q\(\sqrt[k]{\vec{a}}\):~\vec{a}\in  \N^n \cap \fB_n\(H\)\
\text{and}\  [\Q(\sqrt[k]{\vec{a}}):\Q] = k^{n} \right \}.
$$
Clearly $K_{\ba}$ is never Galois since
$K_{\ba} \subseteq \R$ and the Galois closure of $K_{\ba}$ must contain
the $k$-th cyclotomic extension $Z_{k} = \Q(\zeta_k)$, where
$\zeta_k$ is some fixed primitive $k$-th root of unity.

\begin{theorem}
 \label{thm:MaxDeg}
Let $k \geqslant  3$ be an odd integer.
Then, as $H \to \infty$,
$$
G_n^{k}(H) 
= \(\frac{1}{n!\zeta(k)^n}+O\({\rm e}^{-(1+o(1))\sqrt{ (\log H)(\log \log H)/2}}\)\)  H^{n}  .
$$
\end{theorem}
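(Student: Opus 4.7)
The plan is to adapt the argument used for Theorem~\ref{thm:MaxGal} to the case of odd $k\geq 3$. Since $k$ is odd, every positive integer has a unique $k$-th power free representative modulo $k$-th powers, and the set of $k$-th power free positive integers, under multiplication modulo $k$-th powers, forms a free $\Z/k\Z$-module $M$ with basis indexed by the primes. The degree condition $[\Q(\sqrt[k]{\vec{a}}):\Q] = k^n$ is equivalent to the classes $[a_1],\ldots,[a_n]$ in $M$ spanning a rank-$n$ free submodule $V$, and since replacing each $a_i$ by its $k$-th power free part does not change $\Q(\sqrt[k]{\vec{a}})$, $G_n^k(H)$ equals the number of rank-$n$ free submodules $V$ of $M$ admitting an ordered basis of $k$-th power free integers in $[1,H]^n$.

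The next step is to count tuples directly. Let $T(H)$ denote the number of ordered $n$-tuples of $k$-th power free positive integers in $[1,H]^n$ whose classes are $\Z/k\Z$-independent. Combining the classical asymptotic $\#\{a\leq H : a \text{ is } k\text{-th power free}\} = H/\zeta(k)+O(H^{1/k})$ with a control of $\Z/k\Z$-dependent tuples (handled by a variant of Theorem~\ref{thm:NHk}) yields $T(H) = (H/\zeta(k))^n + O(H^{n-\delta})$ for some $\delta>0$; this error is much smaller than required. Each rank-$n$ free submodule $V$ is counted in $T(H)$ with multiplicity equal to the number of its ordered $\Z/k\Z$-bases lying in $[1,H]^n$. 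For a ``generic'' submodule $V$, whose minimal basis $(a_1,\ldots,a_n)$ has all components of size comparable to $H$, this multiplicity is exactly $n!$: any alternative basis is of the form $\vec{a}\cdot M$ for some $M\in\GL_n(\Z/k\Z)$ that is not a permutation matrix, and then a component of this product (taken modulo $k$-th powers) is either a non-trivial power $a_j^d$ with $d\in(\Z/k\Z)^\times\setminus\{1\}$ or a non-trivial product of several $a_j$'s, whose $k$-th power free part typically exceeds $H$.

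The main obstacle is to bound the atypical contribution, arising from submodules $V$ for which strictly more than $n!$ ordered bases lie in $[1,H]^n$. Such a tuple $\vec{a}$ must have either some component $a_i$ whose $d$-th power (for some $d\in(\Z/k\Z)^\times\setminus\{1\}$) reduces modulo $k$-th powers to an integer $\leq H$, or a non-trivial product of components with the same property; in either case, the relevant integer must possess an unusually large $k$-th-power-ful divisor. The number of $k$-th power free integers in $[1,H]$ with this structure is controlled by a smooth-number--type estimate, essentially as in the proof of Theorem~\ref{thm:MaxGal}, bounding the atypical contribution by $H^n\exp\(-(1+o(1))\sqrt{(\log H)(\log\log H)/2}\)$. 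Combining the generic contribution $T(H)/n! \sim H^n/(n!\zeta(k)^n)$ with this atypical bound yields the claimed asymptotic.
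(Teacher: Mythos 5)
Your overall strategy --- reduce to counting tuples that are independent modulo $k$-th powers in $\Q^\times$, count $k$-free tuples, divide by $n!$, and bound the atypical multiplicity --- matches the paper's, but there are two genuine gaps. The first is your opening claim that $[\Q(\sqrt[k]{\vec{a}}):\Q]=k^n$ is \emph{equivalent} to $\Z/k\Z$-independence of the classes $[a_1],\ldots,[a_n]$ in $\Q^\times/(\Q^\times)^k$. Since $\Q$ contains no primitive $k$-th root of unity for $k\geqslant 3$, Kummer theory does not apply over $\Q$, and this equivalence is precisely the nontrivial arithmetic input. The paper passes to $Z_k=\Q(\zeta_k)$, where Kummer theory identifies $\gal(Z_kK_{\ba}/Z_k)$ with $\langle a_1,\ldots,a_n\rangle (Z_k^\times)^k/(Z_k^\times)^k$, and then proves a dedicated lemma: for odd $k$ the map $\Q^\times/(\Q^\times)^k\to Z_k^\times/(Z_k^\times)^k$ is injective (using irreducibility of $t^k-\alpha$ and the non-abelianness of the Galois group of $t^r-\alpha$). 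That lemma is where oddness of $k$ actually enters; your proposal invokes oddness only for the existence of unique $k$-free representatives (which holds for every $k$ on positive integers), so the step on which the whole reduction rests is missing.

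The second gap is the mechanism you propose for the atypical contribution. You claim that if a component of $M\vec{a}$, with $M\in\GL_n(\Z/k\Z)$ not a permutation matrix, reduces modulo $k$-th powers to an integer $\leqslant H$, then the relevant integer must carry an unusually large $k$-full divisor, to be controlled by a smooth-number estimate. This fails: take $k=3$ and $a_i=gu_i$ with $g,u_1,u_2,u_3$ squarefree and pairwise coprime, $g\approx H^{2/3}$, $u_i\approx H^{1/3}$; each $a_i\approx H$ is squarefree (no powerful divisor at all), yet $a_1a_2a_3\equiv u_1u_2u_3\leqslant H$ modulo cubes, so the submodule has more than $n!$ ordered bases in the box. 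The true sources of atypicality are (i) large pairwise gcds, which the paper excludes via $\pwgcd(\vec{a})\leqslant Q$ at cost $O(H^nQ^{-1})$ by a trivial divisor sum (not a friability estimate), and (ii) $Q$-friable components, where $\psi(H,Q)\ll Hu^{-u}$ enters and which is the sole reason the error term has the shape ${\rm e}^{-(1+o(1))\sqrt{(\log H)(\log\log H)/2}}$, obtained by balancing $Q^{-1}$ against $u^{-u}$. Both conditions are needed for the rigidity argument (each $a_i$ owns a prime $p_i>Q$, forcing $M$ to be a permutation matrix), and neither appears in your sketch, so the stated error term is not justified as written.
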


We remark that the general case of adjoining any choice of $k$-th
roots (possibly complex) to $\Q$ follows easily from the  case of real roots. 
Namely, for extensions of maximal degree, Kummer theory, see, for example,~\cite[Section~14.7]{DuFo} 
or~\cite[Chapter~VI, Sections~8--9]{lang-algebra},  implies that
the absolute Galois group acts transitively on the set of $n$-tuples
of the form
$\( \zeta_k^{e_{1}} \sqrt[k]{a_{1}}, \ldots, \zeta_k^{e_{n}}
\sqrt[k]{a_{n}}\)$, 
as $e_{1}, \ldots, e_{n}$ ranges over integers in $[1,k]$.

Further, since $K_{\ba}(\zeta_{k})$ is the normal closure of
$K_{\ba}$, it follows from Kummer theory (cf.
Section~\ref{sec:proof-theorem-maxdeg}) that
$\gal(K_{\ba}(\zeta_{k})/\Q)$ is maximal if and only if
$[K_{\ba}:\Q] = k^{n}$.  In particular, Theorem~\ref{thm:MaxDeg}
also allows us to count fields $K_{\ba}$ such that the normal closure has
maximal Galois group.
In fact, it is not difficult to show that the number of 
$\vec{a} = (a_{1}, \ldots, a_{n})\in  \N^n \cap \fB_n\(H\)$ such that
$a_{1}, \ldots, a_{n}$ are multiplicatively dependent modulo $k$-th powers is
$o(H^{n})$,   so Theorem~\ref{thm:MaxDeg} easily yields 
an asymptotic formula for the number of distinct  fields $K_{\vec{a}}$, as well
as an asymptotic formula   for the number of distinct normal closures
$K_{\vec{a}}(\zeta_{k})$, as $\vec{a}$ ranges over elements in
$\N^n \cap \fB_n\(H\)$.

\section{Sums of arithmetical functions of several variables}
\label{sec:ArithFunc}

\subsection{Setup} 

We say that $f$ is a   multiplicative   function of $\N^m$
if 
\begin{equation}
\label{eq:mult func}
f(e_1,\ldots,e_m)f(d_1,\ldots,d_m)=f(e_1d_1,\ldots,e_m d_m)
\end{equation}  
for all pairs of tuples of positive integers with 
$$\gcd(e_1\cdots e_m, d_1\cdots d_m)=1.$$
We next recall  some results of 
La~Bret\`eche~\cite[Theorems~1 and~2]{B01}, which
for  a nonnegative
multiplicative function $f$, links the  sum 
\begin{equation}
\label{multiplesums} 
S_{\bbeta}(X)= \sum_{1\leqslant d_1\leqslant X^{\beta_1}}\ldots
\sum_{1\leqslant d_m\leqslant X^{\beta_n}} f(d_1,\ldots,d_m),
\end{equation} 
where $\bbeta = \(\beta_1,\ldots, \beta_m\) \in \R^m$,
to the behavior of the associated multiple Dirichlet series
$$F(s_1,\ldots,s_m)=\sum_{d_1=1}^{\infty}\ldots\sum_{d_m=1}^{\infty} \frac{f(d_1,\ldots,d_m)}{d_1^{s_1}\cdots d_m^{s_m}}.
$$  

The goal is to understand analytic properties of $F$ in order to
obtain a tauberian theorem for multiple Dirichlet series. This is for
instance possible when $F$ can be written as an Euler product. As in
the one dimensional case, this is equivalent to the multiplicativity
of $f$.

In that case, formally we have  
$$F(\vec{s})=\prod_{p~\mathrm{prime}} \(\sum_{\bnu \in \N_0^{m}} \frac{f(p^{\nu_1},\ldots,p^{\nu_m})}{p^{\nu_1s_1+\cdots+\nu_ms_m}}\),
$$ 
where  $\bnu = \(\nu_1,\ldots,\nu_m\)$. 

To state the relevant results from~\cite{B01}  we need further  notations. We denote by $\cL_{m}(\C)$ the space of linear forms
$$
\ell(X_1, \ldots, X_{m})  \in \C[X_1, \ldots, X_{m}].
$$   
Let $\left\{\vec{e}_j\right\}_{j=1}^{m}$  be the canonical 
basis of $\C^{m}$ and let be $\left\{\vec{e}_j^*\right\}_{j=1}^{m}$ 
the dual basis in $\cL_{m}(\C)$. We denote by $\cL\R_{m}(\C)$   the set of linear forms of $\cL_{m}(\C)$  such that their restriction to $\R^{m}$ maps to $\R$.
We define $\cL\R_{m}^{+}(\C)$ similarly with respect to the set  $\R_+$ of 
nonnegative 
real numbers. 

As usual, we use  $\|\cdot\|_1$ to denote the $L^1$-norm  and  use 
$\langle \cdot \rangle$ to denote the inner product of vectors from $\R^{m}$. 

We view $\R^m$ as a partially ordered set using the relation 
$\vec{d}>\vec{e}$ if and only if this inequality holds component-wise 
for $\vec{d}, \vec{e} \in  \R^{m}$.

We also apply the notations $\RE$ and $\IM$, for the real and imaginary part,
to vectors in the natural component-wise fashion. 

\subsection{Asymptotic formula} 

We are now able to state~\cite[Theorem~1]{B01} which gives an asymptotic formula
for the sums $S_{\bbeta}(X)$ 
given by~\eqref{multiplesums}.

\begin{lemma}\label{DLB1} Let $f$ be a
nonnegative 
 arithmetical function on $\mathbb{N}^{m}$ and $F$ be the associated Dirichlet series
$$F(\vec{s})=\sum_{d_1=1}^{+\infty}\cdots\sum_{d_m=1}^{+\infty} \frac{f(d_1,\ldots,d_m)}{d_1^{s_1}\cdots d_m^{s_m}}.$$ 
We assume that there exists $\balpha\in\R_{+}^{m}$ such that $F$
satisfies the following properties:  
\begin{itemize}
\item[(P1)] $F(\vec{s})$ is absolutely convergent for $\vec{s}$ such that $\RE  (\vec{s}) > \balpha$.

\item[(P2)]There exists a family of $N$ nonzero linear forms $\cL=\left\{\ell^{(i)}\right\}_{i=1}^{N}$ of $\cL\R_{m}^{+}(\C)$ 
and a family of $R$ nonzero linear forms $\left\{h^{(r)}\right\}_{r=1}^R$ of $\cL\R_{m}^{+}(\C)$ and                                                                                                                            $\delta_{1},\delta_{3}>0$ such that the function $H$ from $\C^{m}$ to $\C$ defined by 
$$H(\vec{s}) = F(\vec{s}+\balpha)\prod_{i=1}^{N} \ell^{(i)}(\vec{s})$$
can be analytically continued in the domain
\begin{align*}
\cD(\delta_1,\delta_3)=\bigl\{\vec{s} \in\C^{m}:~\RE  \(\ell^{(i)}(\vec{s})\) &> -\delta_1,\ \forall i,  \ \text{and}\\
& \RE \(h^{(r)}(\vec{s})\)> -\delta_3 ,  \forall r
                                                                                                                          \bigr
                                                                                                                          \}
\end{align*}

\item[(P3)]There exists $\delta_2>0$ such that, for all $\varepsilon_1, \varepsilon_2>0$ the following upper bound
$$
H(\vec{s}) \ll \prod_{i=1}^{N}\(\vert\IM \(\ell^{(i)}(\vec{s})\)\vert + 1\)^{1-\delta_2\min\{0,\RE  \(\ell^{(i)}(\vec{s})\)\}}\(1+ \|\IM(\vec{s})\|_1^{\varepsilon_1}\)
$$
holds uniformly in the domain $\cD(\delta_1-\varepsilon_2, \delta_3-\varepsilon_2)$.  
\end{itemize}
Let $J(\balpha)=\{j\in \{1,\ldots,m\}:~\alpha_j=0\}$. 
We set $r=\#J(\balpha)$ and  let $\ell^{(N+1)},\ldots,\ell^{(N+r)}$ be the $r$ 
linear forms $\vec{e}_j^{*}$ where $j\in J(\balpha)$. 
Then, under previous hypotheses (P1), (P2) and (P3), there exists a polynomial $Q \in \R[X]$ of degree less or equal to 
$N+r-\rank(\{\ell^{(i)}\}_{i=1}^{N+r})$ and a real $\vartheta>0$, that depends on 
$\cL$, $\left\{h^{(r)}\right\}_{r=1}^R$, $\delta_1$, $\delta_2$, $\delta_3$, $\balpha$ 
and $\bbeta$, such that, for all $X\geqslant  1$,
we have
$$ S_{\bbeta}(X) 
=X^{\langle \balpha,\bbeta\rangle} \(Q(\log X) + O(X^{-\vartheta})\).
$$
\end{lemma}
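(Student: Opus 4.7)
The strategy would be a multidimensional Mellin--Perron argument combined with iterated contour shifting. The first step is to write $S_{\bbeta}(X)$ as a smoothed $m$-fold inverse Mellin integral of $F(\vec{s})$ against kernels of the form $\prod_{j=1}^m W_j(s_j)\, X^{\beta_j s_j}$, where each $W_j$ is essentially $1/s_j$ multiplied by a smooth truncation chosen so that $W_j$ decays rapidly on vertical lines. By hypothesis (P1) one can start with contours $\RE s_j = \alpha_j + \kappa_j$ for small $\kappa_j > 0$. I would then compare this smoothed version to the original $S_{\bbeta}(X)$ via a standard Perron-type inequality (inserting an auxiliary averaging on a short scale), reducing the problem to estimating the smoothed integral up to an admissible error.

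The second step uses the analytic input of (P2) and (P3): the function $H(\vec{s}) = F(\vec{s} + \balpha)\prod_{i=1}^N \ell^{(i)}(\vec{s})$ is holomorphic and controlled on the tube $\cD(\delta_1, \delta_3)$. The plan is to shift each coordinate $s_j$ leftward past $\RE s_j = \alpha_j$ by a small amount $\eta > 0$. During this shift one crosses the polar locus $\bigcup_i \{\ell^{(i)}(\vec{s} - \balpha) = 0\}$ together with, when $\alpha_j = 0$, the pole from $1/s_j$, which is exactly what the $r$ extra forms $\vec{e}_j^*$ encode. A residue computation on the intersection of these hyperplanes at $\vec{s} = \balpha$, obtained by substituting the Taylor expansion of $H$ at this point, isolates the factor $X^{\langle \balpha, \bbeta \rangle}$ and leaves an integral over the transverse directions that evaluates to a polynomial in $\log X$. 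A dimension count on the resulting Laurent expansion shows that the degree of this polynomial is at most $N + r - \rank\{\ell^{(i)}\}_{i=1}^{N+r}$, matching the claim.

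The main obstacle, and the technical heart of~\cite{B01}, is proving that the shifted integral contributes only $O\(X^{\langle \balpha, \bbeta \rangle - \vartheta}\)$ for some effective $\vartheta > 0$. This requires shifting one variable at a time while keeping the others on appropriate vertical lines and exploiting (P3) to obtain integrable decay in $\IM(\vec{s})$ despite the mild polynomial growth $\(|\IM \ell^{(i)}(\vec{s})| + 1\)^{1 - \delta_2 \min\{0, \RE \ell^{(i)}(\vec{s})\}}$ permitted there. The auxiliary forms $\{h^{(r)}\}_{r=1}^R$ enter as additional constraints that prevent the shifted contour from encountering unwanted singularities outside the prescribed polar locus, while the smoothing scale from step one must be balanced against the shift size $\eta$ to produce the final value of $\vartheta$ in terms of $\cL$, $\{h^{(r)}\}$, $\delta_1$, $\delta_2$, $\delta_3$, $\balpha$, and $\bbeta$. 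Since the conclusion is exactly~\cite[Theorem~1]{B01}, for each application in the sequel our task reduces to verifying (P1)--(P3) for the specific Dirichlet series involved and identifying the resulting linear forms.
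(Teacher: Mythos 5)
This lemma is quoted in the paper verbatim as \cite[Theorem~1]{B01} and no proof is supplied there, so your concluding remark --- that the result is taken as a black box and the real work in this paper is verifying (P1)--(P3) for each specific Dirichlet series --- matches the paper's treatment exactly; moreover, your outline (multidimensional Mellin--Perron representation, iterated one-variable contour shifts past the polar hyperplanes $\ell^{(i)}(\vec{s}-\balpha)=0$ and the extra forms $\vec{e}_j^*$ for $\alpha_j=0$, residue extraction at $\vec{s}=\balpha$ giving the polynomial in $\log X$, and use of (P3) to control the shifted integrals) is a faithful description of the strategy de la Bret\`eche actually uses in that reference. As a blind reconstruction it is consistent with the source, with the caveat that the genuinely hard part --- the uniform error estimates that yield an explicit $\vartheta>0$ under the iterated shifts --- is only gestured at and lives entirely in \cite{B01}.
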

 
We remark that in (P2) of  Lemma~\ref{DLB1} we have shifted the argument
of $F$ by $\balpha$ so that the critical point
is~$\vec{s} = \vec{0}$. 

Furthermore, the exact value of the degree of $Q$ is given by~\cite[Theorem~2]{B01}, which 
we  state in a form which is sufficient for our purpose. 
When $\cL=\left\{\ell^{(i)}\right\}_{i=1}^{n}$ is a finite subset of
$\cL\R_{m}^{+}(\C)$, we define  
$$
\Conv^{*}(\cL)=\sum_{\ell \in\cL} \R_{+}^{*} \ell.
$$
\goodbreak 

\begin{lemma}\label{DLB2} 
Let $f$ be an arithmetical function satisfying all the hypotheses of Lemma~\ref{DLB1}. Let $J(\balpha)=\{j\in \{1,\cdots,m\}: \alpha_j=0\}$. We set $r=\#J(\balpha)$ and $\ell^{(N+1)},\ldots,l^{(N+r)}$ the $r$ linear forms $\vec{e}_j^{*}$ where $j\in J(\balpha)$ as before. If $\rank(\{\ell^{(i)}\}_{i=1}^{N+r})=m$, $H(0,\ldots,0)\neq 0$ and 
$$
\sum_{j=1}^{m}\beta_j \vec{e}_j^* \in \Conv^{*}(\{\ell^{(i)}\}_{i=1}^{N+r}), 
$$ 
then $Q$ is a polynomial 
\begin{itemize}
\item of degree $D = N+r-m$, 
\item with the leading coefficient $H(0, \ldots, 0)  I$, 
where 
$$
I =\lim_{X\to+\infty} X^{-\langle \balpha,\bbeta\rangle}(\log X)^{-D}
\int_{\substack{\vec{y}\in [1,\infty)^{N}\\   
\prod_{i=1}^N y_i^{\ell_i(\vec{e}_j)}\leqslant X^{\beta_j}\\
1 \leqslant j \leqslant m}} \prod_{i=1}^N y_i^{\ell_i(\vec{\balpha})-1} \d \vec{y}. 
$$
with  $\vec{y} = (y_1, \ldots, y_N)$. 
\end{itemize}
\end{lemma}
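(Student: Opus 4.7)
The plan is to apply the standard multi-dimensional Mellin--Perron method, pushing through the hypotheses (P1)--(P3) to extract not only the leading power of $X$ but also the precise degree and leading coefficient of the polynomial $Q$ appearing in Lemma~\ref{DLB1}.

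First, I would write $S_{\bbeta}(X)$ as an iterated Perron integral
\[
S_{\bbeta}(X) = \frac{1}{(2\pi i)^{m}}\int_{\RE (\vec{s})=\vec{c}} F(\vec{s}+\balpha)\prod_{j=1}^{m} \frac{X^{\beta_{j} s_{j}}}{s_{j}}\, \d\vec{s},
\]
with $\vec{c}$ chosen sufficiently positive that the shifted series (P1) converges absolutely. Expressing $F(\vec{s}+\balpha)= H(\vec{s})\prod_{i=1}^{N} \ell^{(i)}(\vec{s})^{-1}$ and invoking (P3) for polynomial control along vertical strips, I would shift each contour to the left so that $\RE(\vec{s})$ enters the holomorphy domain $\cD(\delta_{1},\delta_{3})$. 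Each shift picks up a multi-dimensional residue along the arrangement of polar hyperplanes $\ell^{(i)}(\vec{s})=0$ and $s_{j}=0$ (for $j\in J(\balpha)$), and (P3) guarantees that the remaining truncated integral contributes only $O(X^{\langle\balpha,\bbeta\rangle - \vartheta})$ for some $\vartheta>0$.

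The core combinatorial step is then to analyze the iterated residue at $\vec{s}=\vec{0}$. Under the rank hypothesis $\rank(\{\ell^{(i)}\}_{i=1}^{N+r})=m$, the origin is an isolated zero-dimensional stratum of the hyperplane arrangement, so a suitable affine change of coordinates reduces the problem to a single multi-dimensional residue. The degree $D=N+r-m$ of $Q$ matches the excess of polar hyperplanes over the ambient dimension: after $m$ of these factors are absorbed to produce a point evaluation, the remaining $D$ factors generate $\log X$ terms through differentiation of the Mellin kernel $\prod_{j} X^{\beta_{j} s_{j}}$. Factoring out $H(\vec{0})\neq 0$ from the residue, the remaining iterated residue of $\prod_{j} X^{\beta_{j} s_{j}}\bigl(s_{j} \prod_{i} \ell^{(i)}(\vec{s})\bigr)^{-1}$ can be rewritten, via the substitution $y_{i}=\exp\bigl(-\ell^{(i)}(\vec{s})\log X\bigr)$, as the volume integral defining $I$; under this change of variables, the boundary constraints $\prod_{i} y_{i}^{\ell^{(i)}(\vec{e}_{j})}\leqslant X^{\beta_{j}}$ of the polytope in the statement emerge directly from the positions of the truncated Perron contours.

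The main obstacle is the multi-variable contour shift itself: in several complex variables one cannot naively shift past an entire arrangement of polar hyperplanes, since intersection poles of every codimension may contribute separately. I would handle this by shifting stratum by stratum in increasing codimension, using (P3) at each stage to bound the lower-dimensional residual integrals, and exploiting the fact that all linear forms lie in $\cL\R_{m}^{+}(\C)$ so that the shifts can be chosen to keep each $\RE \ell^{(i)}$ of controlled sign. The convex-geometric condition $\sum_{j} \beta_{j} \vec{e}_{j}^{*} \in \Conv^{*}(\{\ell^{(i)}\}_{i=1}^{N+r})$ is exactly what ensures that the limiting polytope defining $I$ has positive $N$-dimensional volume after the $\log X$-rescaling, and hence that $Q$ attains the asserted degree $D$ with leading coefficient $H(\vec{0})\,I\neq 0$.
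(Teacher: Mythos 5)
You should first note that the paper does not prove this lemma at all: it is quoted, essentially verbatim, from de la Bret\`eche~\cite[Theorem~2]{B01}, so there is no in-paper argument to compare against, and a genuine proof here amounts to reproving the main technical content of that entire paper. Measured against that standard, your outline identifies the right framework (multi-dimensional Perron integral, factorisation $F(\vec{s}+\balpha)=H(\vec{s})\prod_i \ell^{(i)}(\vec{s})^{-1}$, extraction of the singular contribution at $\vec{s}=\vec{0}$), but it leaves the central step as a gap. The difficulty is precisely that under the hypothesis $D=N+r-m>0$ the origin is a \emph{non-simple} point of the polar arrangement: more than $m$ hyperplanes pass through it, iterated residues are order-dependent, and there is no canonical ``multi-dimensional residue'' to evaluate. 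Your plan to ``shift stratum by stratum in increasing codimension'' is not a well-defined procedure as stated, and (P3) alone does not tell you how to bound the lower-dimensional boundary integrals that arise at each stage; making this rigorous is exactly what occupies most of~\cite{B01}, which proceeds instead by iterated one-variable Perron formulas with an induction and a separate lemma identifying the main term as a volume.

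A second concrete gap is the identification of the leading coefficient with $H(\vec{0})\,I$. The residue you propose to compute lives in the $m$ variables $\vec{s}$, whereas $I$ is an integral over $N$ variables $\vec{y}\in[1,\infty)^N$ with $N\geqslant m$ in general, so the substitution $y_i=\exp\(-\ell^{(i)}(\vec{s})\log X\)$ cannot literally be a change of variables (it also has the wrong sign if one wants $y_i\geqslant 1$). The standard way to bridge this is to introduce the $N$ auxiliary variables \emph{before} taking residues, via
$$
\prod_{i=1}^{N}\frac{1}{\ell^{(i)}(\vec{s})}=\int_{[1,\infty)^{N}}\prod_{i=1}^{N} y_i^{-\ell^{(i)}(\vec{s})-1}\,\d\vec{y},
\qquad \RE\(\ell^{(i)}(\vec{s})\)>0,
$$
and then to carry out the $\vec{s}$-integration; the constraints $\prod_i y_i^{\ell^{(i)}(\vec{e}_j)}\leqslant X^{\beta_j}$ and the weight $\prod_i y_i^{\ell^{(i)}(\balpha)-1}$ then appear naturally, and the hypotheses $\rank=m$, $H(\vec{0})\neq 0$ and $\sum_j\beta_j\vec{e}_j^*\in\Conv^*(\{\ell^{(i)}\})$ are what guarantee that the resulting Laplace-type asymptotics produce degree exactly $D$ with a nonzero limit $I$. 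Without this (or an equivalent device) your sketch does not actually connect the residue to the stated formula for $I$, so the proposal should be regarded as a correct heuristic but not a proof.
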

\goodbreak

\section{Towers of quadratic extensions}

\subsection{Degree}
We now recall  a result of Balasubramanian,   Luca and
Thangadurai~\cite[Theorem~1.1]{BLT}  
which gives an explicit formula for the degrees of the 
fields~\eqref{eq:Quadr Tow}.

For  $\vec{a} = (a_1,\ldots,a_n)\in \Z_*^n$ we define the products
\begin{equation}
\label{eq: bJ}
b_\cJ=\prod_{j\in \cJ }a_j.
\end{equation}
Define  
$\gamma_\vec{a}$ as the number of subsets $\cJ\subseteq \{ 1,\ldots,n\}$
with 
$$
b_\cJ\in \Sq. 
$$
Note that since the empty set $\cJ$ is not excluded, 
we always have $\gamma_\vec{a}\geqslant 1$. 

Furthermore, we say that 
$\vec{a}$ is {\it multiplicatively  independent modulo   squares\/} if none 
of the products $b_\cJ$ with $\cJ\ne \varnothing$ is a square (that is, if  $\gamma_\vec{a} = 1$). 

\begin{lemma}\label{lem: deg}
 For  $\vec{a} = (a_1,\ldots,a_n)\in  \Z_*^n$ we have
$$
[\Q\(\sqrt\vec{a}\): \Q] = \frac{ 2^n} {\gamma _\vec{a}}. 
$$
\end{lemma}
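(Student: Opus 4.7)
The plan is to interpret the lemma through Kummer theory applied to the field $\Q$. Consider the group homomorphism
$$
\phi \colon (\Z/2\Z)^n \longrightarrow \Q^*/(\Q^*)^2,
\qquad
(\varepsilon_1,\ldots,\varepsilon_n) \longmapsto \prod_{j=1}^n a_j^{\varepsilon_j} \bmod (\Q^*)^2.
$$
Identifying the tuple $(\varepsilon_1,\ldots,\varepsilon_n)\in\{0,1\}^n$ with the subset $\cJ=\{j:\varepsilon_j=1\}\subseteq\{1,\ldots,n\}$, the image of $\phi$ is the class of the integer $b_\cJ$ defined in~\eqref{eq: bJ}. Since $b_\cJ\in\Z_*$, the class $[b_\cJ]$ is trivial in $\Q^*/(\Q^*)^2$ if and only if $b_\cJ$ is a positive rational square, which (for integers) means $b_\cJ\in\Sq$. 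Hence $|\ker\phi|=\gamma_\vec{a}$, and consequently
$$
|\operatorname{Im}\phi|=\frac{2^n}{\gamma_\vec{a}}.
$$

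Let $V=\operatorname{Im}\phi\subseteq\Q^*/(\Q^*)^2$ and set $d=\dim_{\F_2}V$. Pick integers $v_1,\ldots,v_d\in\Z_*$ lifting an $\F_2$-basis of $V$. Each $a_j$ is congruent modulo $(\Q^*)^2$ to a product $\prod_i v_i^{\varepsilon_{i,j}}$, so there exists $r_j\in\Q^*$ with $a_j=r_j^2\prod_i v_i^{\varepsilon_{i,j}}$ (the signs match because rational squares are positive). Combining this with the elementary identity $\sqrt{v_i}\sqrt{v_{i'}}=\pm\sqrt{v_iv_{i'}}$ shows $\sqrt{a_j}\in\Q(\sqrt{v_1},\ldots,\sqrt{v_d})$ for every $j$, and symmetrically $\sqrt{v_i}\in\Q(\sqrt\vec{a})$. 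Thus
$$
\Q(\sqrt\vec{a})=\Q(\sqrt{v_1},\ldots,\sqrt{v_d}).
$$

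It remains to prove that $[\Q(\sqrt{v_1},\ldots,\sqrt{v_d}):\Q]=2^d$ when $v_1,\ldots,v_d$ are $\F_2$-linearly independent in $\Q^*/(\Q^*)^2$. I would argue by induction on $d$. Set $L_{d-1}=\Q(\sqrt{v_1},\ldots,\sqrt{v_{d-1}})$, of degree $2^{d-1}$ over $\Q$ by inductive hypothesis. If $\sqrt{v_d}\in L_{d-1}$, then by the standard determination of which elements of $\Q^*$ become squares in a multiquadratic extension (Kummer theory for $\Q$, which contains the $2$-nd roots of unity $\pm 1$), $v_d$ would be congruent modulo $(\Q^*)^2$ to a subproduct $\prod_{i<d}v_i^{\varepsilon_i}$, contradicting $\F_2$-linear independence. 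Hence $[L_{d-1}(\sqrt{v_d}):L_{d-1}]=2$ and by the tower law $[\Q(\sqrt{v_1},\ldots,\sqrt{v_d}):\Q]=2^d=|V|=2^n/\gamma_\vec{a}$.

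The only nontrivial ingredient is the Kummer-theoretic step in the induction, namely the classical fact that $\F_2$-linearly independent classes in $\Q^*/(\Q^*)^2$ generate a multiquadratic extension of the expected dimension; the rest of the argument is a bookkeeping computation relating $\gamma_\vec{a}$ to the kernel of $\phi$.
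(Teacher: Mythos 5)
Your proof is correct. It is worth noting, though, that the paper does not prove Lemma~\ref{lem: deg} itself: the statement is quoted from Balasubramanian, Luca and Thangadurai \cite[Theorem~1.1]{BLT}, whose argument is essentially combinatorial --- one passes to the vectors of prime exponents modulo $2$ and identifies $\gamma_{\vec{a}}$ with the size of the kernel of a matrix over $\ftwo$ (this is the viewpoint in the remark following the lemma). Your route instead works inside $\Q^*/(\Q^*)^2$: the identification $|\ker\phi|=\gamma_{\vec{a}}$ (correctly including $\cJ=\varnothing$, and correctly using that a nonzero integer is a square in $\Q^*$ iff it lies in $\Sq$), the reduction $\Q(\sqrt{\vec{a}})=\Q(\sqrt{v_1},\ldots,\sqrt{v_d})$ with the sign bookkeeping via $\sqrt{u}\sqrt{v}=\pm\sqrt{uv}$ handled properly for negative entries, and then the Kummer-theoretic degree count. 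This is precisely the alternative the authors themselves point to after Corollary~\ref{cor:Max Deg} (legitimate since $\pm1\in\Q$), except that you carry it out for the full degree formula $2^n/\gamma_{\vec{a}}$ rather than only the maximal-degree case. The one ingredient you leave as a black box --- that a rational number which becomes a square in $\Q(\sqrt{v_1},\ldots,\sqrt{v_{d-1}})$ must lie in $\langle v_1,\ldots,v_{d-1}\rangle(\Q^*)^2$ --- is exactly what \cite[Proposition~37, Chapter~14]{DuFo} or \cite[Theorem~8.1, Chapter~VI, Section~8]{lang-algebra} provide (and admits a short direct induction via $\sqrt{c}=x+y\sqrt{v_{d-1}}$), so no gap remains; what your approach buys is a self-contained derivation, while the citation to \cite{BLT} keeps the paper shorter and matches the $\ftwo$-linear-algebra language used later in the proof of Theorem~\ref{thm:MaxGal}.
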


Note  
 that $\gamma _\vec{a}$ is a power of $2$ as examining prime factorisation
of $a_1,\ldots,a_n$ we see that  this is the size of the kernel of some matrix over the field of two elements,
see also~\cite[Lemma~2.1]{BLT}. 
Hence the right hand side of the formula of Lemma~\ref{lem: deg} is
indeed an integer number.

\begin{cor}\label{cor:Max Deg}  For  $\vec{a} = (a_1,\ldots,a_n)\in  \Z_*^n$ the field 
$\Q\(\sqrt\vec{a}\)$ 
satisfies~\eqref{eq:MaxDeg}  if and only if $a_1,a_2,\ldots, a_n\in  \Z_*$ are  multiplicatively independent modulo  squares.
\end{cor}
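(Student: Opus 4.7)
The plan is to read off the statement directly from Lemma~\ref{lem: deg}. By that lemma, for any $\vec{a}\in \Z_*^n$ we have
$$
[\Q(\sqrt{\vec{a}}):\Q] = \frac{2^n}{\gamma_{\vec{a}}},
$$
and since the left-hand side is a positive integer bounded above by $2^n$, the equality $[\Q(\sqrt{\vec{a}}):\Q]=2^n$ is equivalent to $\gamma_{\vec{a}}=1$.

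The next step is to translate $\gamma_{\vec{a}}=1$ into the multiplicative independence statement. Recall that $\gamma_{\vec{a}}$ counts the subsets $\cJ \subseteq \{1,\ldots,n\}$ with $b_\cJ = \prod_{j\in \cJ} a_j \in \Sq$, and the empty subset always contributes, since $b_\varnothing = 1$ is a perfect square. Thus $\gamma_{\vec{a}}\geqslant 1$ always, with equality precisely when no nonempty subset $\cJ$ produces a product $b_\cJ$ that is a perfect square. By the very definition of multiplicative independence modulo squares recalled just before Lemma~\ref{lem: deg}, this latter condition is exactly that $a_1,\ldots,a_n$ are multiplicatively independent modulo squares.

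Combining the two equivalences yields the corollary. There is no real obstacle here: the only point that deserves a line of comment is that the empty product is, by convention, a square, so it is automatically counted and the threshold $\gamma_{\vec{a}}=1$ corresponds to the absence of \emph{nontrivial} square sub-products, matching the definition used in the paper.
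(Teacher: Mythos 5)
Your proof is correct and follows exactly the route the paper intends: the corollary is an immediate consequence of Lemma~\ref{lem: deg}, since the formula $[\Q(\sqrt{\vec{a}}):\Q]=2^n/\gamma_{\vec{a}}$ gives maximal degree precisely when $\gamma_{\vec{a}}=1$, which by definition is multiplicative independence modulo squares. The paper leaves this unwritten (offering only Kummer theory as an alternative justification), and your spelled-out version matches it.
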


Alternatively, since $\Q$ contains all roots of unity of order two,
Corollary~\ref{cor:Max Deg}  also follows from Kummer theory, cf.~\cite[Proposition~37, Chapter~14]{DuFo} 
or~\cite[Theorem~8.1, Chapter~VI, Section~8]{lang-algebra}.

\subsection{Discriminant}

First we recall that for a square-free  $a \in  \Z_*$ we have
\begin{equation}\label{discrQd}
\Discr \Q\(\sqrt{a}\) =   \begin{cases}
a, & \text{if}\ a \equiv 1 \pmod 4,\\
4a, & \text{if}\ a \equiv 2,3 \pmod 4.
\end{cases}
\end{equation}

We now examine the discriminant $\Discr \Q\(\sqrt\vec{a}\) $ of the
field $\Q\(\sqrt\vec{a}\)$  over $\Q$.  Since this is of independent interest and also  for future applications we establish a formula for  $\Discr\Q\(\sqrt\vec{a}\)$ which applies to $\vec{a} \in \Z^n$ rather than only for  $\vec{a} \in \N^n$.  

\begin{lemma}\label{lem: discr} Let $a_1,a_2,\ldots, a_n\in  \Z_*$ be  multiplicatively independent modulo squares.
Then 
$$\Discr\Q\(\sqrt\vec{a})\)=
\prod_{\substack{\cJ\subseteq \{ 1,\ldots,n\}\\ \cJ\neq  \varnothing}} \Discr\ \Q\(\sqrt{b_\cJ}\) > 0, 
$$ 
where the integers $b_\cJ$ are defined by~\eqref{eq: bJ}. 
\end{lemma}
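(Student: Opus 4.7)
The plan is to derive the identity from the conductor--discriminant formula applied to the abelian extension $K=\Q\(\sqrt{\vec a}\)/\Q$. By Corollary~\ref{cor:Max Deg}, the multiplicative independence of $a_1,\ldots,a_n$ modulo squares forces $[K:\Q]=2^n$, so $G=\gal(K/\Q)\simeq(\Z/2\Z)^n$ has exactly $2^n$ characters.

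First I would parametrise the $2^n-1$ nontrivial characters of $G$ by the nonempty subsets $\cJ\subseteq\{1,\ldots,n\}$. Fixing the generators $\sigma_i$ of $G$ determined by $\sigma_i(\sqrt{a_j})=-\sqrt{a_j}$ if $i=j$ and $\sigma_i(\sqrt{a_j})=\sqrt{a_j}$ otherwise, let $\chi_\cJ$ be the character sending $\sigma_i\mapsto -1$ precisely when $i\in\cJ$. A direct check shows that the fixed field of $\ker\chi_\cJ$ is the quadratic subfield $\Q(\sqrt{b_\cJ})$, and the multiplicative independence modulo squares ensures that the $2^n-1$ quadratic fields $\Q(\sqrt{b_\cJ})$ are pairwise distinct, so they exhaust the quadratic subfields of $K$.

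Next I would invoke the conductor--discriminant formula
$$
|\Discr(K/\Q)|=\prod_{\chi\in\widehat{G}}\mathfrak{f}(\chi),
$$
where $\widehat G$ is the character group of $G$ viewed, via class field theory, as a group of primitive Dirichlet characters. The trivial character contributes $1$, and for each $\chi_\cJ$ the conductor equals the absolute discriminant of its fixed quadratic field, that is $\mathfrak{f}(\chi_\cJ)=|\Discr\Q(\sqrt{b_\cJ})|$; this identification of conductors of quadratic Dirichlet characters with the quadratic discriminants computed in~\eqref{discrQd} is the one genuinely number-theoretic input. The formula immediately yields the claimed equality up to sign.

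Finally I would pin down the signs. An embedding $K\hookrightarrow\C$ is real iff every $\sqrt{a_i}$ is sent into $\R$, which happens iff $a_i>0$ for all $i$; hence either $K$ is totally real with signature $(2^n,0)$, or $K$ is totally complex with signature $(0,2^{n-1})$. In either case, for $n\geqslant 2$ the number $r_2$ of complex places is even, and so $\Discr(K/\Q)>0$. On the right-hand side, by the rule $\sign(b_\cJ)=\prod_{j\in\cJ}\sign(a_j)$ together with~\eqref{discrQd}, the number of nonempty $\cJ$ with $\Discr\Q(\sqrt{b_\cJ})<0$ is either $0$ (if all $a_i>0$) or exactly $2^{n-1}$, again even for $n\geqslant 2$, so the product is positive as well. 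Combining the two sign computations with the conductor--discriminant identity gives the signed equality claimed in the lemma. The main substantive ingredient is the conductor--discriminant formula; everything else is bookkeeping of subfields and signs.
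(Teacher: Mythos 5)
Your proposal is correct and follows essentially the same route as the paper: reduce the identity to absolute values via the conductor--discriminant formula (identifying the nontrivial characters of $\gal(K/\Q)\simeq(\Z/2\Z)^n$ with the quadratic subfields $\Q(\sqrt{b_\cJ})$, whose discriminants equal the corresponding conductors), and then handle signs separately by noting that $r_2\in\{0,2^{n-1}\}$ is even for $n\geqslant 2$ while the number of negative $b_\cJ$ is $0$ or $2^{n-1}$. No substantive differences from the paper's argument.
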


\begin{proof} First we establish the positivity of $\Discr \Q\(\sqrt\vec{a}\)$ for 
 $n\geqslant 2$.     Indeed, if $\vec{a}\in \N^n$ then there is nothing to
prove. Otherwise we see that all embeddings of  
$\Q\(\sqrt\vec{a}\)$ are complex, and thus, recalling the multiplicative independence condition and 
Corollary~\ref{cor:Max Deg}, we see their number $r_2$ is given by  
$$
r_2  = \frac{1}{2}  [\Q\(\sqrt\vec{a}\): \Q]=2^{n-1}. 
$$
Since $n\geqslant 2$ we see that $r_2$ is even and by Brill's
theorem (see~\cite[Lemma~2.2]{Wash}), for the sign of the
discriminant,   we obtain 
$$
\sign \(\Discr\(\Q\(\sqrt\vec{a}\)\)\)= (-1)^{r_2} =1.
$$
  
Next, we show the product on the right hand side of the desired formula is also positive. 
Assume that the vector $\vec{a}$ has $k$ negative and $m$ positive components.
If $k= 0$ there is nothing to prove.   If $0 < k \le n$,
  we have exactly 
$2^{n-1}$ negative values  
among $b_\cJ$, $\cJ\subseteq \{ 1,\ldots,n\}$, and since $n \ge 2$ we
have the desired positivity again.

Hence  the desired equality is equivalent to 
$$
\left| \Discr\Q\(\sqrt\vec{a}\)\right| =
\prod_{\substack{\cJ\subseteq \{ 1,\ldots,n\}\\ \cJ\neq
\varnothing}} \left| \Discr \Q\(\sqrt{b_\cJ}\) \right|,
$$
which is a simple consequence of the {\it conductor-discriminant formula\/}
(see, for example,~\cite[Theorem~3.11]{Wash}).

Namely, given a Dirichlet character $\chi$, let $f_{\chi}$ denote its
conductor, and given a group $X$ of Dirichlet characters, let $K$ be the
number field associated with $X$.  Then the discriminant of $K$ is
given by
$$
\Discr K =
(-1)^{r_{2}} 
\prod_{\chi \in X } f_{\chi},
$$ 
where, as before, $r_2$  is the number of   are complex  embeddings. 

We apply this to $K = \Q(\sqrt{a_{1}}, \ldots, \sqrt{a_{n}})$,
under the assumption that $G=G(K/\Q) = (\Z/2\Z)^n$ and hence 
$X = \widehat{G}$ is the dual group.  We first note that any
nontrivial character $\chi \in \widehat{G}$ is quadratic, and its kernel
$\ker(\chi)$ can be identified with an index two subgroup of $G$.
Hence the fixed field $K^{\ker(\chi)}$ is a quadratic extension of
$\Q$, and any such character $\chi$ can be identifed with a Dirichlet
character associated with the quadratic extension $K^{\ker(\chi)}/\Q$.

Using the conductor-discriminant formula twice, we find that
$$
\left| \Discr  K^{\ker(\chi)}\right| = f_{\chi}, \qquad \forall \chi  \in \widehat{G}, 
$$
(note that $f_{\chi}=1$ if $\chi =\chi_0$ is trivial), as well as  
$$
\left| \Discr  K \right|  = \prod_{\chi  \in \widehat{G}} f_{\chi} =
\prod_{\chi   \in \widehat{G}\setminus \{\chi_0\}} \left|d(K^{\ker(\chi)})\right|.
$$
Now, $\{K^{\ker(\chi)}):~\chi   \in \widehat{G}\setminus \{\chi_0\} \}$ is exactly the set of
quadratic extension of $\Q$, contained in $K$, which in turn are
parametrised by  the elements of the set $ \{ \Q (\sqrt{b_\cJ} ):~\cJ\subseteq \{
  1,\ldots,n\},\  \cJ\neq    \varnothing\}$.  
\end{proof}

\subsection{Maximal Galois groups} 

Let $\ftwo$ denote the finite field with two elements.  Given $H \in \R_+$ we
consider an arbitrary $\ftwo$-vector space $V_{H}$, of dimension $\pi(H)$, 
where, as usual,  $\pi(H)$ denotes the number of primes $p \leqslant H$.

Let $\Sqf \subseteq \N$ denote the set of square-free positive
integers.  
Define a map
$\varphi_{H} :~(\Sqf \cap [1,H]) \to V_H$ by
\begin{equation}
\label{eq:Map phi}
\varphi_{H}(a) = (e_p \mod 2)_{p \leqslant H},
\end{equation}
where  
$$
a = \prod_{p \leqslant H} p^{e_{p}},
$$
and we identify $V_{H}$ with $\pi(H)$-tuples of elements in $\ftwo$,
indexed by primes $p \leqslant H$.

We now show that  $ \gal\(\Q\(\sqrt{\vec{a}}\)/\Q\)$ is maximal if and only if 
the vectors  $\varphi_{H}(a_1), \ldots, \varphi_{H}(a_n)$ are linearly independent
over $\ftwo$. 

\begin{lemma}
\label{lem:Span} 
 Given $\vec{a} \in (\Sqf \cap [1,H])^n$ we have $ \gal\(\Q\(\sqrt{\vec{a}}\)/\Q\) \simeq
 (\Z/2\Z)^{n}$ if and only if
$$
\dim_{\ftwo}\( \Span\( \varphi_{H}(a_1), \ldots, \varphi_{H}(a_n) \)\) =n.
$$
\end{lemma}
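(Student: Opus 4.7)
The plan is to reduce the statement to a translation between multiplicative independence modulo squares and $\ftwo$-linear independence of the images under $\varphi_H$, then invoke Corollary~\ref{cor:Max Deg}.

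First, I would note that since $\#\gal(\Q(\sqrt{\vec{a}})/\Q) = [\Q(\sqrt{\vec{a}}):\Q]$ (the extension being Galois is standard), Corollary~\ref{cor:Max Deg} tells us that the Galois group is maximal, that is isomorphic to $(\Z/2\Z)^n$, if and only if $a_1,\ldots,a_n$ are multiplicatively independent modulo squares, i.e. for every nonempty $\cJ\subseteq\{1,\ldots,n\}$ the product $b_\cJ = \prod_{j\in\cJ} a_j$ fails to lie in $\Sq$.

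Next, I would translate the multiplicative condition into an additive one in $V_H$. Since each $a_i\in\Sqf\cap[1,H]$ is squarefree, its prime factorisation is $a_i = \prod_{p\leqslant H} p^{e_p^{(i)}}$ with $e_p^{(i)}\in\{0,1\}$, and so $\varphi_H(a_i) = (e_p^{(i)})_{p\leqslant H}$. For any subset $\cJ\subseteq\{1,\ldots,n\}$, writing $c_i = \1_{i\in\cJ}\in\ftwo$, the $p$-adic valuation of $b_\cJ$ equals $\sum_{i=1}^n c_i e_p^{(i)}$, hence
\[
b_\cJ \in \Sq \iff \sum_{i=1}^n c_i e_p^{(i)} \equiv 0 \pmod{2} \text{ for all } p\leqslant H \iff \sum_{i=1}^n c_i\,\varphi_H(a_i) = \vec{0}
\]
in $V_H$. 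Consequently, the existence of a nonempty $\cJ$ with $b_\cJ\in\Sq$ is equivalent to the existence of a nontrivial $\ftwo$-linear relation among $\varphi_H(a_1),\ldots,\varphi_H(a_n)$. Equivalently, $a_1,\ldots,a_n$ are multiplicatively independent modulo squares if and only if $\varphi_H(a_1),\ldots,\varphi_H(a_n)$ are linearly independent over $\ftwo$, i.e. they span an $n$-dimensional subspace.

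Combining these two equivalences yields the statement of the lemma. There is essentially no obstacle here; the only point requiring a little care is the passage between $\ftwo$-coefficients (the natural language for $\varphi_H$) and subsets $\cJ$ (the language of Corollary~\ref{cor:Max Deg}), which is the standard identification $c_i \leftrightarrow \1_{i\in\cJ}$ exploited above. The squarefreeness of the $a_i$ is what makes $\varphi_H$ injective enough for this translation to be lossless; without it one would have to mod out by squares first.
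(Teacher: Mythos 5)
Your proof is correct, but it takes a different route from the paper's. The paper disposes of Lemma~\ref{lem:Span} in one sentence, by direct appeal to Kummer theory (the relevant roots of unity $\pm 1$ lie in $\Q$, so the degree of the multiquadratic extension equals the order of the subgroup of $\Q^{\times}/(\Q^{\times})^{2}$ generated by $a_1,\ldots,a_n$, which is visibly the $\ftwo$-span of the exponent vectors). You instead route the argument through Corollary~\ref{cor:Max Deg} --- which in the paper rests on the Balasubramanian--Luca--Thangadurai degree formula of Lemma~\ref{lem: deg} --- and then carry out explicitly the elementary dictionary between ``some nonempty subproduct $b_\cJ$ is a perfect square'' and ``the vectors $\varphi_H(a_i)$ satisfy a nontrivial $\ftwo$-linear relation,'' using squarefreeness to read off the exponent vectors mod $2$. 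Your version is more self-contained and elementary (no Kummer theory needed beyond what Corollary~\ref{cor:Max Deg} already encapsulates), at the cost of being longer; the paper's version is shorter but leans on a citation. One small point you gloss over: passing from $[\Q(\sqrt{\vec{a}}):\Q]=2^n$ to $\gal(\Q(\sqrt{\vec{a}})/\Q)\simeq(\Z/2\Z)^{n}$ uses not just that the extension is Galois of degree $2^n$ but that every automorphism has order dividing $2$ (each sends $\sqrt{a_i}$ to $\pm\sqrt{a_i}$), so the group is elementary abelian; this is immediate but worth a clause.
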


\begin{proof}
  The statement follows immediately from Kummer theory
  (cf.~\cite[Section~14.7]{DuFo}
  or~\cite[Chapter~VI, Sections~8--9]{lang-algebra}) since the
  relevant roots of unity, namely $\pm 1$, are in $\Q$.
\end{proof}

\section{Proofs of main results}

\subsection{Proof of Theorem~\ref{thm:NHk}}
As usual, for a prime $p$ and an integer $m \geqslant 0$  and $y\ne 0$, we use $p^{m}\parallel y$ 
to denote that 
$$
p^{m}\mid  y \mand p^{m+1}\nmid y.
$$
For $\vec{m}\in \sM_{n,k} $ and
$\vec{u}=(u_1, \ldots, u_{n}) \in \N^n$ we set
$$
u_{\vec{m}}=\prod_{\substack{p^{m_j}\parallel u_j\cr \forall
    j}}p
$$
(that is, a prime $p$ is included in the above product if  
and only if $p^{m_j}\parallel u_j$ for every $j=1, \ldots, n$, and
thus the product is finite since $\vec{m} \in \sM_{n,k}$ implies that
$m_j>0$ for at least one $j$).

Then we   parametrize the solutions of $u_1\cdots u_n=w^k$ as follows: 
\begin{equation}
\label{eq:mult_decomp1}
u_j=\prod_{\substack{\vec{m}\in \sM_{n,k}}} u_{\vec{m}}^{m_j},\qquad   1 \leqslant j \leqslant n.
\end{equation}

We note that this    parametrisation resembles   the one used in~\cite{B17}, yet it is different
in that no coprimality condition is imposed. 

We observe that 
$$N_n^{(k)}(H) = \#\left\{ (u_{\vec{m}})_{\vec{m}\in \sM_{n,k}}:~\prod_{\substack{\vec{m}\in \sM_{n,k}}} u_{\vec{m}}^{m_j}\leqslant H, \
 j=1, \ldots, n \right\},
 $$
where the vectors $(u_{\vec{m}})_{\vec{m}\in \sM_{n,k}}$
are formed from
all possible vectors $\vec{u}=(u_1, \ldots, u_{n}) \in \N^n$.

We now define $f(d_1,...,d_n)$ as the number of vectors  $(u_{\vec{m}})_{\vec{m}}$, 
 $\vec{m} = (m_1, \ldots, m_n) \in \sM_{n,k}$,  for which  we simultaneously have
$$
d_j = \prod_{\vec{m}\in \sM_{n,k}} u_{\vec{m}}^{m_j}, \qquad j = 1, \ldots, n.
$$
Clearly  $f(d_1, \ldots, d_n)$ is multiplicative as in~\eqref{eq:mult func}.

The multiple Dirichlet series associated to this counting problem is 
\begin{align*} 
F(\vec{s})&=\sum_{(u_{\vec{m}})_{\vec{m}\in \sM_{n,k}}  }\prod_{j=1}^n  \( \prod_{\substack{\vec{m}\in \sM_{n,k}}} u_{\vec{m}}^{m_j}\)^{-s_j} \\
&= \prod_p\(1+\sum_{ {\vec{m}} \in \sM_{n,k} }  \frac{1}{p^{ m_1s_1+\ldots+m_ns_n}}   \).
\end{align*}

Let   $\{ \ell_{ \vec{m}}\}_{ \vec{m}\in \sM_{n,k,1}}$ defined by
$$
\ell_{ \vec{m}}( \vec{s}) =\sum_{j=1}^n m_j s_j.
$$
There exists a  holomorphic function $G(\vec{s})$, which 
for any fixed $\varepsilon$ is uniformly bounded in the domain
$$
\left\{ \vec{s}\in \C^n :~ \RE \ell_{ \vec{m}}(s)\geqslant  \tfrac 1{2}+\varepsilon,\ \vec{m}\in \sM_{n,k,1} \right\}  
$$
such that 
$$
F(\vec{s}) = \prod_{ \vec{m}\in \sM_{n,k,1}}\zeta\(\ell_{ \vec{m}}( \vec{s}) \)    G(\vec{s}).
$$ 
To see this, note that this domain is in fact equal to
$$
\left\{ \vec{s}\in \C^n :~ \RE s_j\geqslant  \tfrac {1+2\varepsilon}{2k}, \ j=1,\ldots,n\right\}, 
$$
and for all ${\bf s}$ in this domain,
$G(\vec{s})$ is a product of terms of the form $P(\{p^{-\ell_{ \vec{m}}( \vec{s})}\}_{\vec{m}\in \sM_{n,k}})$ where $P$ is the polynomial defined by 
$$
P(\{X_{\vec{m}}\}_{\vec{m}\in \sM_{n,k}})=
\(1+\sum_{ {\vec{m}} \in \sM_{n,k} } X_{ \vec{m}}   \)  \prod_{ \vec{m}\in \sM_{n,k,1}}(1-X_{ \vec{m}}).
$$  
When one develops the product, the only monomial of degree $1$
corresponds to $\vec{m}\in \sM_{n,k,j}$ with $j\geqslant 2$. Further, for any
$j\geqslant 2$ and $\vec{m}\in \sM_{n,k,j}$, we have $\RE 
\ell_{ \vec{m}}({\bf s})\geqslant  1+2\varepsilon$ for all ${\bf s}$
in the domain,
and it is then  easy to
deduce the boundedness of $G(\vec{s})$.

We have
$$G\(\frac1k,\ldots,\frac1k\)=\prod_p\(1-\frac1p\)^{q_{n,k}}\(1+\sum_{i=1}^\infty \frac{\#\sM_{n,k,i}}{p^i}\).$$

We write $m_j=\varepsilon_j+kh_j$, where $\varepsilon_j\in\{ 0,k-1\}$ and 
$h_j\in\N_0$, $j =1, \ldots, n$. We have
$$ 1+\sum_{i=1}^\infty \frac{\#\sM_{n,k,i}}{p^i} 
= \(1-\frac1p\)^{ -n }\(1+\sum_{i=1}^\infty \frac{\#\cE_{n,k,i}}{p^i}\).
$$ 
We observe that   $k\mid m_1+\ldots +m_n$ is equivalent to  $ k\mid \varepsilon_1+\ldots +\varepsilon_n$. 
Then we have
$$G\(\frac1k,\ldots,\frac1k\)=\prod_p\(1-\frac1p\)^{q_{n,k}-n}\(1+\sum_{i=1}^\infty \frac{\#\sE_{n,k,i}}{p^i}\).$$

The Dirichlet series $F$ satisfies  the hypotheses of Lemma~\ref{DLB1} with 
$$\balpha =(\alpha_1, \ldots,\alpha_n) =\(\frac 1k,\ldots,\frac 1k\),
\ \text{ and } 
\bbeta =(\beta_1,\ldots,\beta_n) =(1,\ldots,1).
$$
One can check the hypothesis~P3 by using the bound 
$$\zeta(1+s)s\ll (1+|\IM s|)^{1-\RE  (s)/3+\varepsilon},\qquad \text{for } \RE s
\in \left[ -\tfrac 12, 0\right].
$$ 
which holds for any fixed $\varepsilon>0$.

Then there exists $\vartheta_{n,k}>0$, $Q_{n,k}\in \R[X]$  such that 
$$N_n^{(k)}(H)=H^{n/k}Q_{n,k}(\log H)+O(H^{n/k-\vartheta_{n,k}}).$$
We now apply Lemma~\ref{DLB2} with $N = \#\sM_{n,k,1}=q_{n,k}  $,
$$\{\ell^{(i)}\}_{1\leqslant i\leqslant N}=\{\ell_{\vec{m} }\}_{\vec{m}\in \sM_{n,k,1}}$$ 
and see that $ \deg Q_{n,k} = q_{n,k}^*$
since $\ell^{(j)}(\vec{s})=ks_j\in \{\ell_{\vec{m} }\}_{\vec{m}\in \sM_{n,k,1}}$
for all $1\leqslant j\leqslant n$. Then the set 
$\sM_{n,k}*$ is the subset of $\sM_{n,k,1}$ which  avoids the forms
$\{\ell^{(i)}\}_{1\leqslant i\leqslant N}$.
Moreover  
\begin{align*}
Q_{n,k}(\log H)&\sim \frac{G\(\tfrac 1k,\ldots,\tfrac 1k\)}{H^{n/k}}\int_{\substack{
(z_{\vec{m}} )\in 
[1,\infty)^{\#\sM_{n,k,1}}\\    \prod_{\vec{m}\in \sM_{n,k,1}} z_{\vec{m}}^{m_j}\leqslant H}}
\d  {z} 
\\&\sim G\(\tfrac 1k,\ldots,\tfrac 1k\)\int_{\substack{
(z_{\vec{m}} )\in 
[1,\infty)^{q_{n,k}}\\    \prod_{\vec{m}\in \sM^*_{n,k} } z_{\vec{m}}^{m_j}\leqslant H}}
\frac{\d \vec{z}}{\prod_{\vec{m}\in \sM^*_{n,k} } z_{\vec{m}}}
\\&\sim G\(\tfrac 1k,\ldots,\tfrac 1k\) I_{n,k}(\log H)^{ q_{n,k}^*}, 
\end{align*}
as $H\to \infty$, 
where $I_{n,k}$ is defined in~\eqref{defIk}. 
This  defines  the leading coefficient  of $Q_{n,k}$ and gives the desired result.

\subsection{Proof of Theorem~\ref{thm:FnH}} 
Let $K$ be a field counted by $F_n(X).$ 
There are $2^n-1$ quadratic extensions of $\Q$ in $K$. We write them as $\Q(\sqrt{c_j})$ with $1\leqslant j\leqslant 2^n-1$ where $c_j$ is square-free. 

We now recall that $t_n$ is defined by~\eqref{eq: def hn}.  Then, clearly, there are 
$t_n$  ways to choose $(j_1,\ldots, j_n)$ such that   $K= \Q\(\sqrt\vec{a}\)$ with the vector 
$\vec{a}=(c_{j_1},\ldots , c_{j_n}) \in \Z^n$.
The other $c_j$ can be calculated from $\vec{a}$ by choosing  for each of the remaining $j$ some unique set  $\cJ\subseteq\{ 1,\ldots, n\}$ 
of cardinality $\# \cJ \geqslant 2$ and calculating
$$\prod_{k\in\cJ}c_{j_k}=c_jd_j^2.$$
Then we have
\begin{align*} F_n(X)=\frac{1}{t_n}\#\bigl\{ &(a_1,\ldots,a_n)\in \Z^n:~ \mu^2(a_k)=1, \\
&\Discr\(\Q\(\sqrt\vec{a}\), \Q\)\leqslant X, \ [\Q\(\sqrt\vec{a}\): \Q]=2^n \bigr\}.
\end{align*}

Given square-free $a_1, \ldots, a_n \in \N$, we write
\begin{equation}
\label{eq:mult_decomp2}
a_j= \sigma_j 2^{\nu_j}\prod_{1\leqslant h\leqslant 2^n-1}z_h^{\varepsilon_j(h)}, \qquad j =1, \ldots, n, 
\end{equation}
where $\sigma_j \in \{ -1, 1\}$, $\nu_j  \in \{ 0,1\}$, $j=1, \ldots, n$, 
and $z_h$ are some odd positive integers., $h =1, \ldots,  2^n-1$.  

To see that the decomposition in ~\eqref{eq:mult_decomp2} is possible,
following~\cite{B17}, we number all nonempty subsets
$\cJ_h \subseteq\{ 1,\ldots, n\}$ and define $z_h$ as the greatest
common divisor of $a_j$, $j \in \cJ_h$.

 Since $a_1, \ldots, a_n$ are   square-free, the numbers $z_h$ are coprime.
For $\cJ\subseteq\{ 1,\ldots, n\}$, and $b_\cJ$ as in~\eqref{eq: bJ} we have 
\begin{equation}
\begin{split} 
\label{eq:defccJ}
b_\cJ   & = \prod_{j\in \cJ} a_j  =2^{n_\cJ}  s_\cJ  \prod_{j\in \cJ} \sigma_j  
\prod_{1\leqslant h\leqslant 2^n-1}z_h^{\sum_{j\in \cJ}\varepsilon_j(h)}\\
&  =2^{n_\cJ} s_\cJ  c_\cJ d_\cJ^2, 
\end{split} 
\end{equation}
where, as before, $\varepsilon_j(h)$ denotes the $j$-th digit in the binary
expansion of $h$,  
$$
n_\cJ=\sum_{j\in \cJ}\nu_j \mand  s_\cJ = \prod_{j\in \cJ} \sigma_j,
$$ 
and  $c_\cJ$ is odd and square-free.   
We have
$$c_\cJ=\prod_{\substack{1\leqslant h\leqslant 2^n-1\\ \sum_{j\in
      \cJ}\varepsilon_j(h)\equiv 1\pmod 2}}z_h.$$  
We write 
$$ \Discr \Q\(\sqrt{a_1},\ldots ,\sqrt{a_n}\) =2^W D, 
$$
where $D$ is odd.  

Using   Lemma~\ref{lem: discr} and the formula~\eqref{discrQd},
we derive from~\eqref{eq:defccJ} that 
$$D=\prod_{\substack{\cJ\subseteq \{ 1,\ldots,n\}\\ \cJ\neq \varnothing }}c_\cJ
=\prod_{\substack{1\leqslant h\leqslant 2^n-1 }}z_h^{\delta_h}$$
with
$$
\delta_h=2^{n-s(h)}\sum_{\substack{0\leqslant k\leqslant s(h)\\ k\equiv 1\pmod 2}}\binom{s(h)}{ k}=2^{n-1}, \qquad 1\leqslant h\leqslant 2^n-1,
$$
and where 
$$
s(h) = \sum_{j=1}^n \varepsilon_j(h)
$$ 
denotes the sum of digits in the binary expansion of
$h$.

Then $D$ is the largest odd divisor of 
$$
\lcm\(\vec{a}\)^{2^{n-1}} =\lcm\(a_1,\ldots,a_n\)^{2^{n-1}}.
$$

Let 
\begin{itemize}
\item $r_{1,4}(\cJ)$ be the number of $j\in \cJ$ such that $a_j\equiv 1\pmod 4$, 
\item  $r_{3,4}(\cJ)$ be the number of $j\in \cJ$ such that $a_j\equiv 3\pmod 4$, 
\item  $r_{2,8}(\cJ)$ be the number of $j\in \cJ$ such that $a_j\equiv 2\pmod 8$,
\item   $r_{6,8}(\cJ)$ be the number of $j\in \cJ$ such that $a_j\equiv 6\pmod 8$. 
\end{itemize}
We have 
$$r_{1,4}(\cJ)+r_{3,4}(\cJ)+r_{2,8}(\cJ)+r_{6,8}(\cJ)=\# \cJ.
$$
We now calculate $v_2\(\Discr ( \Q ( \sqrt{b_\cJ}),\Q )\)$, where $b_\cJ$ is as in~\eqref{eq: bJ}
and $v_2(m)$ denotes the largest power of $2$ dividing an integer $m\ne 0$.

Then we have 
\begin{align*}
v_2&\(\Discr \( \Q ( \sqrt{b_\cJ}),\Q \)\) \\
&\qquad  =   \begin{cases}
3, & \text{if}\ r_{2,8}(\cJ)+r_{6,8}(\cJ)\equiv 1\pmod 2,\\
2, & \text{if}\ r_{3,4}(\cJ) +r_{6,8}(\cJ)\equiv 1\pmod 2 ,\\
&\quad  \text{and}\ r_{2,8}(\cJ) +r_{6,8}(\cJ)\equiv 0\pmod 2,\\
0, & \text{otherwise.}
\end{cases}
\end{align*}
We now set   $\rho_{k_{1},k_{2}}=r_{k_{1},k_{2}}(\{ 1,\ldots, n\})$. We observe that 
$$\rho_{1,4}+\rho_{3,4}+ \rho_{2,8}+\rho_{6,8}=n.
$$

The number $U_3$  of $\cJ$ such that $v_2\(\Discr ( \Q ( \sqrt{b_\cJ}),\Q )\)=3$ is
$$ U_3 = \begin{cases} 2^{\rho_{1,4}+\rho_{3,4}+ \rho_{2,8}+\rho_{6,8}-1}=2^{n-1}&\text{if}\  \rho_{2,8}+\rho_{6,8}\geqslant  1,\\
0 & \text{if}\  \rho_{2,8}=\rho_{6,8}=0.
\end{cases}
$$ 

The number $U_2$ of $\cJ$ such that $v_2\(\Discr ( \Q ( \sqrt{b_\cJ}),\Q )\)=2$ is 
$$ 
U_2 = \begin{cases} 2^{\rho_{1,4}+\rho_{3,4}+ \rho_{2,8}+\rho_{6,8}-2}=2^{n-2}&\text{if}\   \rho_{3,4}+\rho_{6,8}\geqslant  1,  \\
&\quad   \rho_{2,8}+\rho_{6,8}\geqslant  1,\,   \rho_{2,8}+\rho_{3,4}\geqslant  1,\\
2^{ \rho_{3,4}-1}=2^{n-1}&\text{if}\  \rho_{3,4} \geqslant  1, \  \rho_{2,8}=\rho_{6,8}=0,\\
0&\text{if}\  \rho_{3,4} =\rho_{6,8}=0\\
& \quad \text{ or } \rho_{6,8}\geqslant  1,\ \rho_{2,8}+\rho_{3,4}=0. 
\end{cases}
$$

Using that 
$$
W = 3 U_3 + 2U_2
$$
we now deduce that   
$$
W= \begin{cases}
 2^{n+1} &\text{if}\   \rho_{3,4}+\rho_{6,8}\geqslant  1, \  \rho_{2,8}+\rho_{6,8}\geqslant  1,  \   \rho_{2,8}+\rho_{3,4}\geqslant  1,
\\
 3\cdot 2^{n-1} &\text{if}\  \rho_{3,4},\rho_{6,8}=0, \ \rho_{2,8} \geqslant  1,  \text{ or }  \rho_{3,4}, 
 \rho_{2,8} =0, \ \rho_{6,8} \geqslant  1,  \\
 2^{n } &\text{if}\   \ \rho_{3,4}\geqslant  1, \ \rho_{2,8}, \rho_{6,8}=0,\\
0&\text{if}\  \rho_{3,4},  \rho_{2,8} ,\rho_{6,8} =0  . 
\end{cases}
$$

Let $C_n(W)$ the number of possible configurations of the vectors
$\vec{a}$ corresponding to the four possibilities 
$$
1 \pmod 4, \qquad 3 \pmod 4, \qquad 2 \pmod 8, \qquad  6\pmod 8
$$
which correspond to a given  value  $W$.
Furthermore  when $\vec{z}$ and  a configuration is fixed the signs  $\sigma_1, \ldots, \sigma_n$ 
are also uniquely defined.  

In particular 
$$
\sum_{W\in
\{2^{n+1}, 3\cdot 2^{n-1}, 2^{n },0\}}C_n(W) = 4^n.
$$
More precisely, we have 
$$
C_n(W)= \begin{cases} 4^n -3 \cdot 2^n +2
  &\text{if}\  W=2^{n+1},
\\
 2^{n+1}-2 &\text{if}\  W=3\cdot 2^{n-1},   \\
 2^{n } -1&\text{if}\  W= 2^{n },\\
1&\text{if}\  W=0  . 
\end{cases}
$$

Let
$$T_n(  x)=  
\sum_{ \vec{z}\in \cZ}
\mu^2\(\prod_{1\leqslant h\leqslant 2^n-1} z_h\)   ,
$$
where
$$
\cZ = \{\vec{z}\in   \N^{2^{n}-1}:~ z_1,\ldots ,  z_{2^n-1}\ \text{odd and}\  z_1\ldots  z_{2^n-1}\leqslant  x\}.
$$
Then \begin{equation}\label{FnX} F_n(X)=\frac{1}{t_n} \sum_{W\in
\{2^{n+1}, 3\cdot 2^{n-1}, 2^{n },0\}}C_n(W)T_n\(\frac{X^{1/2^{n-1}}}{2^{W/2^{n-1}}}\).
\end{equation}

We have 
\begin{equation}
\label{eq:Tn mu}
T_n( x)= \sum_{\substack{m\leqslant x\\m~\text{odd}}}\mu^2(m) (2^n-1)^{\omega(m)}.
\end{equation}

By standard methods, there exists a
polynomial  $Q_{n} $ of degree
$2^n-2$ such that for 
$$
\kappa_n=3/(5+2^n)
$$
we have
$$ 
T_n(x)=\frac1{ (2^n-2)!}x\(Q_n (\log x)+O(x^{-\kappa_n+\varepsilon})\)
$$
for any $\varepsilon > 0$.
Moreover the leading coefficient of $Q_n $ is 
$$
B_n=\frac{2}{2^n+1}\prod_p\(1-\frac1p\)^{2^n-1}\(1+\frac{2^n-1}p\).
$$
Indeed, the associated Dirichlet series is $h_n(s)$ which is given 
by~\eqref{eq:Prod H}. 
It can be written as $h_n(s)=\zeta(s)^{2^n-1}\widetilde h_n(s)$ where 
$\widetilde h_n$ can be analytically continued until $\RE   s>\tfrac12$.  For more details, see~\cite[Exercise~194]{TW}.

From~\eqref{FnX}, we deduce that there exists  a
polynomial $P_n$ of
degree $2^n-2$ such that 
$$
F_n(X)= X^{1/2^{n-1}} \( P_n(\log X)+O\(X^{-\kappa_n/2^{n-1}+\varepsilon}\)\)
$$
for any $\varepsilon > 0$.
   Moreover the leading coefficient of $P_n$ is 
$$
A_n=\frac{4^n +5\cdot 2^n+10}{2^{4+(n-1)(2^n-2)}(2^n-2)!t_n}B_n.
$$

\subsection{Proof of Theorem~\ref{thm:GenSer}} 
Using  $f_n(d) =  F_n(d)- F_n(d-1)$ and~\eqref{FnX}, we write 
\begin{equation}
\label{eq:GnFn}
\begin{split} 
g_n(s)
= \frac{1}{t_n} &\sum_{W\in\{2^{n+1}, 3\cdot 2^{n-1}, 2^{n },0\}} C_n(W)\\
&  \sum_{d=1}^\infty \frac{1}{d^{s}} 
\(T_n\(\frac{d^{1/2^{n-1}}}{2^{W/2^{n-1}}}\)- T_n\(\frac{(d-1)^{1/2^{n-1}}}{2^{W/2^{n-1}}}\)\).
\end{split}
\end{equation}
Note that if there is an integer $m$ with 
$$
\frac{d^{1/2^{n-1}}}{2^{W/2^{n-1}}}\geqslant m > \frac{(d-1)^{1/2^{n-1}}}{2^{W/2^{n-1}}}
$$
then $d \geqslant  2^{W} m^{2^{n-1}} > d-1$.
Hence this is possible if and only if $d = 2^{W} m^{2^{n-1}}$.
We now see from~\eqref{eq:Tn mu} that 
\begin{align*} 
T_n\(\frac{d^{1/2^{n-1}}}{2^{W/2^{n-1}}}\) &- T_n\(\frac{(d-1)^{1/2^{n-1}}}{2^{W/2^{n-1}}}\)\\
& =  \begin{cases}
\mu^2(m) (2^n-1)^{\omega(m)}, & \text{if} \ d = 2^{W} m^{2^{n-1}} \ \text{with}\ m \in \N, \\\
0, & \text{otherwise}.
\end{cases}
\end{align*}
Substituting this in~\eqref{eq:GnFn}, we easily obtain 
$$
g_n(s)
=  \frac{1}{t_n}  \sum_{W\in\{2^{n+1}, 3\cdot 2^{n-1}, 2^{n },0\}} C_n(W)
 \sum_{m=1}^\infty \frac{1}{( 2^{W} m^{2^{n-1}})^{s}} \mu^2(m) (2^n-1)^{\omega(m)}
$$
and the result follows.  

\subsection{Proof of Theorem~\ref{thm:MaxGal}}

As, usual we say that an integer $a$ is $Q$-friable if all prime
divisors of $a$ do not exceed $Q$.  Let $\psi(H,Q)$ denote the number
of positive $Q$-friable integers up to $H$, and let
$$
u = \frac{\log H}{\log Q}
$$
By~\cite[Part~III, Theorem~5.13]{Ten} and Hildebrand's theorem~\cite{H86} for $H \geqslant Q > 2$ we have 
\begin{equation}
\label{eq:Smooth H}
\psi(H,Q)\ll H {u}^{-u }  \end{equation} 
for $\log Q\geqslant  (\log\log H)^{5/3+\varepsilon}$ and any fixed $\varepsilon>0$.

Furthermore, we recall the classical asymptotic formula
\begin{equation}
\label{eq:Sqf H}
\#\(\Sqf  \cap [1,H]\) = \frac{1}{\zeta(2)} H + O\(H^{1/2+o(1)}\).
\end{equation}
where as before $\Sqf$ is the set of square-free integers, 
see~\cite[Theorem~334]{HardyWright} (note that using
the currently best known result of Jia~\cite{Jia} with $17/54$ instead of the exponent 
$1/2$ does not affect our final result).

Finally, for $Q \leqslant H$, we have the trivial bound
\begin{equation}
\begin{split}
\label{eq:gcd Q}
\#\left\{  \vec{a} \in  \fB_n\(H\) :~
\pwgcd(\vec{a})>Q\right\} & \leqslant \frac{n(n-1)}{2}H^{n-2} \sum_{d >Q} \fl{H/d}^2 \\
& \ll H^n Q^{-1}, 
\end{split}
\end{equation}
where for  $\vec{a} = (a_1, \ldots, a_n) \in \N^n$ we define
the pair-wise greatest common divisor $\pwgcd(\vec{a})$ as 
$$ 
\pwgcd(\vec{a}) = \max_{1\leqslant i< j\leqslant n}  \gcd(a_i,   a_j).
$$
For a real $Q\geqslant 2$  we  define
\begin{align*}
\cT_n&(H,Q)\\
& =
\{ \vec{a} \in  \Sqf^n \cap \fB_n\(H\) :~
\pwgcd(\vec{a}) \leqslant Q \text{ and no $a_{i}$ is $Q$-friable} \}. 
\end{align*}

Combining~\eqref{eq:Smooth H}, \eqref{eq:Sqf H}   and~\eqref{eq:gcd Q}, we derive
\begin{equation}
\label{eq:T asymp}
\#\cT_n(H,Q)   = H^n\( \frac{1}{\zeta(2)^n}  + O\(H^{-1/2+o(1)} +   u^{-u} +    Q^{-1}\)\).
\end{equation}

We now claim that if $\vec{a}, \vec{b} \in \cT_n(H,Q)$ generate the same
multiquadratic field (with full Galois group), then they agree up to a
permutation of coordinates.

We see this as follows: applying the map $\varphi_{H}$, given
by~\eqref{eq:Map phi}, componentwise, we may regard $\vec{a}, \vec{b}$
as two $\ftwo$ matrices, with $n$ rows and $\pi(H)$ columns.
Moreover, by the nonfriability assumption on $\vec{a} \in \cT_n(H,Q)$
(together with the assumption of square-freeness), each $\varphi_{H}(a_i)$
has a one in some $p$-indexed column for some prime $p > Q$.

Moreover, for $p>Q$, using the condition on $\pwgcd(\vec{a})$, we note that there can
be at most one nonzero element in each column.  That is, each $a_{i}$
gives rise to some $p_{i} > Q$ such that the $p_{i}$-column has a one
in row $i$, and zeros elsewhere. Recalling Lemma~\ref{lem:Span}, 
this implies that for any  $\vec{a}\in \cT_n(H,Q)$ 
we have   $\gal\(\Q\(\sqrt{\vec{a}}\)/\Q\) \simeq (\Z/2\Z)^{n}$. 

Now, if the fields are the same, we must have ramification at the same
primes. In particular, we see from Lemma~\ref{lem: discr} that for
each $i=1, \ldots, n$ there must exist some $j_{i}$, $1 \leqslant j_i\leqslant n$,
such that $p_{i} \mid b_{j_{i}}$.  Thus, after permuting rows in the
matrix associated with $\vec{b}$, and using that the conditions
$\pwgcd(\vec{b}) \leqslant Q$, also holds for $\vec{b}$, we find that the
matrices associated to $\vec{a}$ and $\vec{b}$ are identical in the
columns indexed by $p_{1}, \ldots, p_{n}$; by permuting the rows of
the two matrices, both
restrictions to these columns are in fact the identity matrix.

Using that the fields $\Q\(\sqrt{\vec{a}}\)$ and $\Q\big(\sqrt{\vec{b}}\big)$ are the same if and only if the associated
$\ftwo$-vectors generated by the map $\varphi_{H}$ have the same span,
there must exist some 
matrix $M \in \GL_{n}(\ftwo)$ that maps the matrix associated with
$\vec{a}$ into the 
matrix associated with $\vec{b}$; comparing columns indexed by
$p_{1}, \ldots, p_{n}$ we find that $M$ is in fact the identity
matrix, provided that we have permuted the rows as above (note
that reordering the rows amounts to reordering the entries in
$\vec{a}, \vec{b}$.)

Thus, after permuting the rows  in $\vec{b}$ as described
above we find that $\vec{a}$ and $\vec{b}$ are the same.

Hence 
\begin{equation}
\label{eq:G T}
G_n(H)  \geqslant \frac{1}{n!} \#\cT_n(H,Q)  + O(H^{n-2}Q),
\end{equation}
where the error term comes from vectors $\vec{a}$ with two identical components
(which cannot exceed $Q$).

It is also obvious that  alternatively we can define
$G_n(H) $ using only vectors $\vec{a}$ with square-free
components, that is,  as
\begin{align*}
&G_n(H)\\
& \quad = \# \left\{\Q\(\sqrt{\vec{a}}\):~\vec{a}\in  \Sqf^n \cap \fB_n\(H\)\
\text{and}\ \# \gal\(\Q\(\sqrt{\vec{a}}\)/\Q\) = 2^n \right \}.
\end{align*}

Thus, recalling~\eqref{eq:Sqf H},  we immediately obtain 
\begin{equation}
\label{eq:G UP}
G_n(H)  \leqslant H^n\( \frac{1}{n!\zeta(2)^n}  + O\(H^{-1/2+o(1)}\)\).
\end{equation}
Combining~\eqref{eq:T asymp} and~\eqref{eq:G T} with~\eqref{eq:G UP}, we 
obtain
$$
G_n(H)  =  H^n\( \frac{1}{n!\zeta(2)^n}  +  O\(H^{-1/2+o(1)} +u^{-u } +    Q^{-1}+ H^{-2}Q \)\).
$$
Choosing 
\begin{equation}
\label{eq:Q}
Q =  \exp\(\sqrt{ ( \log H)(\log \log H)/2}\)
\end{equation}
so that $u =  \sqrt{2( \log H)/(\log \log H)}$, we 
conclude the proof.

\subsection{Proof of Theorem~\ref{thm:MaxDeg} }
\label{sec:proof-theorem-maxdeg}

First recall that $Z_{k} = \Q(\zeta_k)$ denotes the $k$-th cyclotomic
field.  We  use Kummer theory to analyze the extension
$Z_{k} K_{\ba} / Z_{k}$ and then use the fact that
$[Z_{k} K_{\ba} : Z_{k}] = k^{n}$ implies that $[K_{\ba} : \Q] = k^{n}$.
By Kummer theory, (cf.~\cite[Section~14.7]{DuFo} 
or~\cite[Chapter~VI, Sections~8--9]{lang-algebra}) we see that $\gal(Z_{k} K_{\ba} / Z_{k})$ is 
isomorphic to  
$$
(\langle a_{1}, \ldots, a_{n} \rangle  (Z_{k}^{\times})^{k}) / (Z_{k}^{\times})^{k},
$$
where $(Z_{k}^{\times})^{k}$
denotes the $k$-th powers in $Z_{k}^{\times}$.
We begin by showing that any relation, modulo $k$-th powers in
$Z_{k}^{\times}$, must already be a relation modulo $k$-th powers in
$\Q^{\times}$. 
\begin{lemma}
 If $k \geqslant  3$ is an odd integer then the map
 $$
 \Q^{\times}/(\Q^{\times})^{k} \to
 Z_{k}^{\times}/(Z_{k}^{\times})^{k}
 $$
 is injective. In particular, an element $\alpha \in \Q^\times$ is a
 $k$-th power in $Z_k$ if and only if $\alpha \in \Q^{\times k}$
\end{lemma}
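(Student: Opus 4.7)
The injectivity of the map $\Q^\times/(\Q^\times)^k \to Z_k^\times/(Z_k^\times)^k$ is equivalent to its ``in particular'' formulation, so I will prove the following: if $\alpha \in \Q^\times$ and $\alpha = \gamma^k$ for some $\gamma \in Z_k^\times$, then $\alpha \in (\Q^\times)^k$. The strategy is to pass to a real $k$-th root of $\alpha$ and exploit the fact that subfields of $Z_k$ lying in $\R$ sit inside the maximal totally real subfield $Z_k^+$, which is abelian over $\Q$.

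Since $k$ is odd, $\alpha$ has a unique real $k$-th root $\beta$, and every $k$-th root of $\alpha$ in $\bar{\Q}$ has the form $\zeta_k^j \beta$. Writing $\gamma = \zeta_k^j \beta$ shows that $\beta = \zeta_k^{-j} \gamma \in Z_k$, and hence $\beta \in Z_k \cap \R = Z_k^+$. Because $Z_k^+/\Q$ is abelian, the subfield $E := \Q(\beta)$ is Galois over $\Q$. If $[E:\Q] > 1$, then $E$ would contain some root of the minimal polynomial of $\beta$ distinct from $\beta$ itself, necessarily of the form $\zeta_k^j \beta$ with $j \not\equiv 0 \pmod k$. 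This would give $\zeta_k^j \in E \subseteq \R$, but for odd $k \geqslant 3$ the only real $k$-th root of unity is $1$, a contradiction. Hence $\beta \in \Q$ and $\alpha = \beta^k \in (\Q^\times)^k$.

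The only essential use of the oddness hypothesis is in this final step, where it excludes $\zeta_k^j = -1$. This is also the natural obstruction: for even $k$ the statement itself is false (e.g., $-4 = (1+i)^4 \in (Z_4^\times)^4$ while $-4 \notin (\Q^\times)^4$), so any successful argument must invoke the parity of $k$ somewhere.
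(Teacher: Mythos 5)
Your proof is correct, and it takes a genuinely different (and more elementary) route than the paper. The paper argues by contradiction through polynomial irreducibility: it reduces, via the factorization $t^{pr}-\alpha_1^p=\prod_i\bigl(t^r-\zeta_p^i\alpha_1\bigr)$, to an irreducible binomial $t^{r_\ell}-\alpha_\ell$ and then invokes the theorem (Lang, Theorem~9.4 of Chapter~VI) that such a binomial has nonabelian Galois group, so its roots cannot lie in the abelian field $Z_k$. You instead work directly with the real $k$-th root $\beta$ of $\alpha$: from $\gamma=\zeta_k^j\beta$ you get $\beta\in Z_k\cap\R$, so $\Q(\beta)$ is a real \emph{Galois} (indeed abelian) extension of $\Q$; a nontrivial conjugate $\zeta_k^j\beta$ would force a nontrivial real $k$-th root of unity, impossible for odd $k$. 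Both arguments hinge on the abelianness of $Z_k/\Q$, but yours avoids the irreducibility criterion for $t^k-\alpha$ and the nonabelian-Galois-group input entirely, replacing them with the single observation that $\Q(\beta)\subseteq\R$ contains no $k$-th root of unity other than $1$. Your explicit identification of where oddness enters, together with the counterexample $-4=(1+i)^4$ for $k=4$, is a nice touch that the paper does not make. The argument is complete as written; the only pedantic point worth spelling out is that the minimal polynomial of $\beta$ divides $t^k-\alpha$, so every conjugate of $\beta$ has the form $\zeta_k^j\beta$, but you clearly use exactly this.
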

\begin{proof}

 We first recall that
 $t^{k} - \alpha$ is irreducible over $\Q$ (cf.~\cite[Theorem~9.1,
 Chapter~VI, Section~9]{lang-algebra}) provided that $\alpha$ is not a
 $p$-th power of some rational number, for all prime divisors $p \mid k$.
 
  Now, let $\alpha$ denote a element in the kernel of the above map,
 and assume that $\alpha$ is not a $k$-th power of any element in
 $\Q$.  If $\alpha = \alpha_{1}^{p}$ for some $p|k$ and
 $\alpha_{1} \in \Q$, write $k = pr$ and note that
 $t^{pr}-\alpha_{1}^{p} = \prod_{i=1}^{p}( t^{r} - \zeta_{p}^{i}
 \alpha_{1})$.  Thus, if $t^{k}-\alpha_{1}^{p}$ has a root in
 $Z_k$, there exists $i$ such that
$t^{r} - \zeta_{p}^{i} \alpha_{1}$ has a root in
 $Z_k$ which, as $\zeta_{p}^{i}=\zeta_{k}^{ir}$, implies 
that $t^{r}- \alpha_{1}$ has a root in $Z_{k}$.
 Repeating this procedure a finite number of times, we may thus
 reduce to the case of showing that the irreducible polynomial
 $t^{r_{\ell}} - \alpha_{\ell}$ does not have any roots in $Z_{k}$, for
 $\alpha_{\ell} \in \Q \smallsetminus \{ \pm 1 \}$, and $\alpha_\ell$ not a
 $p$-th power for any prime $p \mid  r_{\ell} \mid  k$.  However, by~\cite[Theorem~9.4, Chapter~VI, Section~9]{lang-algebra}, 
 the Galois group of $t^{r_\ell} - \alpha_{\ell}$ is nonabelian, and hence the roots cannot
 be contained in $Z_{k}$ since the cyclotomic extension
 $Z_{k}/\Q$ is abelian.
\end{proof}

Thus, to count fields $K_{\vec{a}}$ with maximal degree is the same as
counting $\vec{a}=(a_{1},\ldots, a_{n})$ such that the group
$ < a_{1}, \ldots, a_{n} > (\Q^{\times})^{k}/ (\Q^{\times})^{k}$ has
cardinality $k^{n}$ --- in other words, counting tuples
$(a_{1}, \ldots, a_{k})$ such that $a_{1},\ldots, a_{k}$ are
independent modulo $k$-th powers in $\Q^\times$.

With $\Sqf_{k}$ denoting  the set of $k$-free integers, we have 
$$
\#\(\Sqf_{k}  \cap [1,H]\) = \frac{1}{\zeta(k)} H + O\(H^{1/k}\).
$$

As in the case of squares, 
we can define
$G_n^{k}(H) $ using only vectors $\vec{a}$ with $k$-free
components, that is,  as
$$ 
G_n^{k}(H)
= \# \left\{\Q\(\sqrt{\vec{a}}\):~\vec{a}\in  \Sqf_{k}^n \cap \fB_n\(H\)\
\text{and}\  [\Q\(\sqrt[k]{\vec{a}}\):\Q] = k^n
               \right \}.
$$

Restricting to the set of ``nice'' $\ba$ as in the argument for
multi-quadratic fields (that is, to the set of vectors $\ba$ having no $Q$-friable
component $a_{i}$, as well making sure any pairwise greatest common divisor is at most
$Q$), the argument is essentially the same except for one small
caveat: if $k$ is not prime, we cannot use linear algebra over a
finite field, but must rather work with the finite ring
$\Z/k\Z$. However, as $\End( (\Z/k\Z)^{n} ) \simeq
\Mat_{n}( \End(\Z/k\Z))$ and the set of
invertible endomorphisms can  be identified with $\GL_{n}(\Z/k\Z)$ the
previous argument applies also for $k$ not prime.

Choosing $Q$ as in~\eqref{eq:Q}, we 
conclude the proof. 

%


\section*{Acknowledgment}

The authors would like to  thank Kevin Destagnol  for many useful comments on the initial version of this paper and Jean-Fran\c cois Mestre for explaining  the proof of Lemma~\ref{lem: discr}. 

The authors are also very grateful to the referee for the very careful reading of the manuscript and many 
valuable suggestions.  

This work started during a very enjoyable visit by I.S. to the 
Institute of Mathematics of Jussieu (Paris) and to the Department of Mathematics 
of  KTH (Stockholm), whose support and hospitality are gratefully acknowledged. 

During this work,
P.K. was partially supported by the 
Swedish Research Council (2016-03701) and  I.S. was supported  by  the
Australian Research Council (DP170100786).

\end{document}